\documentclass[12pt,a4paper]{article}

\usepackage{titlesec}

\titlespacing*{\section}
{0pt}{5.5ex plus 1ex minus .2ex}{4.3ex plus .2ex}
\titlespacing*{\subsection}
{0pt}{5.5ex plus 1ex minus .2ex}{4.3ex plus .2ex}
\usepackage{amsmath,  amsthm, amssymb, color}
\usepackage{enumerate, hyperref}
\usepackage{graphicx}
\usepackage{tikz}
\usetikzlibrary{bbox}

\usepackage[open,openlevel=2,atend]{bookmark}

\usepackage[abbrev,msc-links,nobysame]{amsrefs}  
\usepackage{doi}

\renewcommand{\PrintDOI}[1]{\doi{#1}}

\usepackage{amsmath}

\usepackage{pifont}
 \usepackage{amsthm}
\usepackage{amsgen,amstext,amsbsy,amsopn}
\usepackage{float}
\usepackage{enumitem}
\usepackage{xcolor}

\usetikzlibrary{intersections,calc,arrows.meta}

\newtheorem{thm}{Theorem}[section]

\newtheorem{prop}{Proposition}[section]
\newtheorem{lem}{Lemma}[section]
\newtheorem{cor}{Corollary}

\newtheorem{prob}{Problem}[section]

\newtheorem{claim}{Claim}[section]
\newtheorem{case}{Case}
\newtheorem{remark}{Remark}[section]

\usepackage{hyperref}
\hypersetup{
    colorlinks=true,
    linkcolor=blue,
    citecolor=blue,
    urlcolor=blue
}

\usetikzlibrary{backgrounds}
\usetikzlibrary{arrows}
\usetikzlibrary{shapes,shapes.geometric,shapes.misc}
\pgfdeclarelayer{edgelayer}
\pgfdeclarelayer{nodelayer}
\pgfsetlayers{background,edgelayer,nodelayer,main}
\tikzstyle{none}=[inner sep=0mm]
\tikzstyle{blacknode}=[fill=black, draw=black, shape=circle, minimum
size=0.20cm, inner sep=0pt]
\tikzstyle{whitenode}=[fill={rgb,255: red,245; green,245; blue,245},
draw=black, shape=circle, minimum size=0.20cm, inner sep=1pt]
\tikzstyle{whitenode_v1}=[fill={rgb,255: red,245; green,245; blue,245},
draw=black, shape=circle, minimum size=0.4cm, inner sep=0.8pt, scale=1]
\tikzstyle{blacknode_v1}=[fill=black, draw=black, shape=circle, minimum
size=0.1cm, inner sep=0pt]
\tikzstyle{black_bold}=[-, draw=black, line width=0.6mm]
\tikzstyle{blackedge}=[-, draw=black, fill=none, line width=0.3mm]
\tikzstyle{rededge}=[-, line width=0.3mm, draw=red]
\tikzstyle{blueedge}=[-, line width=0.3mm, draw=blue]
\tikzstyle{grayedge}=[-,line width=0.2mm, draw={rgb,255: red,64; green,64; blue,64}]

\tikzstyle{shadow_silver}=[-, draw=black, fill={rgb,255: red,186; green,186; blue,186}, line width=0.45mm]

\definecolor{xwhite}{RGB}{245,245,245}%
\definecolor{xback}{RGB}{0,0,0}%
\definecolor{xblue}{RGB}{0,0,139}%
\definecolor{xgreen}{RGB}{50,205,50}%
\definecolor{xcrimson}{RGB}{220,20,60}%
\definecolor{xgold}{RGB}{255,215,0}
\definecolor{xmoccasin}{RGB}{255,228,181}

\DeclareMathOperator{\odd}{odd}

\newcommand{\yes}{\ding{51}}
\newcommand{\no}{\ding{55}}
\usepackage{lineno} 

\begin{document}

\title{\textbf{\Large Near-perfect matchings in highly connected 1-planar graphs with a local crossing constraint}}

\author{
 \normalsize  Licheng Zhang \thanks{E-mail: \texttt{lczhangmath@163.com}. College of Mathematics and Statistics, Hunan  Normal  University} \and
  \normalsize  Yuanqiu Huang \thanks{Corresponding author. E-mail: \texttt{hyqq@hunnu.edu.cn}. College of Mathematics and Statistics, Hunan  Normal  University} \and
  \normalsize Zhangdong Ouyang \thanks{ E-mail: \texttt{oymath@163.com}. College of Mathematics and Statistics, Hunan First Normal   University }
}

\date{}
\maketitle

\noindent
\begin{abstract}
 For planar graphs, it is well known that high connectivity implies a Hamiltonian cycle and hence any 4-connected planar graph has a near-perfect matching.   Nevertheless, whether 6-connected 1-planar graphs admit near-perfect matchings remains largely open. The prior art established this for 4-connected 1-planar graphs only when each crossing involves four endpoints that induce a $K_4$.
In this paper, we study  6-connected 1-planar graphs that are drawn such that at all crossings the four endpoints induce a 4-cycle (plus perhaps more edges). We show that these have a near-perfect matching, and in fact even stronger, their scattering number is at most one. Moreover, under the local crossing restriction, the requirement of 6-connectivity is best possible; this is witnessed by explicit constructions due to Biedl and Fabrici et al.

\end{abstract}
\noindent
\textbf{Keywords. Connectivity, Matching numbers,   1-planar graphs }

\section{Introduction} Throughout this paper, we only consider \emph{simple graphs} (i.e., graphs without loops or multiple edges), unless otherwise stated. Let $G=(V(G),E(G))$ be a graph, where $V(G)$ and $E(G)$ represent the vertex set and edge set of $G$, respectively. Let $n(G) = |V(G)|$ and $e(G) = |E(G)|$ denote the \emph{order} and \emph{size} of $G$, respectively. Two distinct edges  with a common endvertex are \emph{adjacent}. 
 A \emph{matching} of a graph $G$ is a set of pairwise non-adjacent edges, that is, no two edges in the matching share a common vertex. The matching number $\alpha^{\prime}(G)$ of a graph $G$ is the size of a maximum matching in $G$. 
 Usually, a perfect matching (also called a 1-factor) of a graph $G$ of order $n$ is defined as a matching of size $\tfrac{n}{2}$, while a near-perfect matching is a matching of size $\tfrac{n-1}{2}$. Clearly, it is possible for $G$ to have a perfect matching only if $n$ is even, and a near-perfect matching only if $n$ is odd.  For simplicity, in this paper, we adopt a unified definition: a matching $M$ of a graph of order $n$ is called a \emph{ near-perfect matching} if $|M|=\lfloor \tfrac{n}{2} \rfloor$. A graph is \emph{Hamiltonian} if it contains a cycle visiting every vertex exactly once (such a cycle is called a \emph{Hamiltonian cycle}). Clearly,  in a Hamiltonian graph $G$ of order $n$, one can find a matching of size $\lfloor n/2 \rfloor$ in a Hamiltonian cycle, yielding a near-perfect matching. 

Matchings in graphs have been extensively studied. In particular, the existence of perfect matchings  or the identification of a large matching in a graph has long attracted attention. One of the earliest results is Petersen's matching theorem from 1891, stating that every bridgeless cubic graph contains a perfect matching \cite{MR1554815}.
Another well-known result is Tutte's 1-factor theorem, which gives a necessary and sufficient condition for the existence of a perfect matching \cite{MR0023048}. From an algorithmic perspective, maximum matchings of  general graphs $G$ can be efficiently constructed using Edmonds' blossom algorithm in $O(e(G)n(G)^2)$ time \cite{MR0177907}.
 For more results on (near-perfect) matchings of graphs, we refer the reader to the classical book of Lov\'{a}sz and Plummer \cite{MR0859549}, or to a more recent book of Lucchesi and Murty \cite{MR4769470}.  

In this paper, we focus on matchings in graphs with specific drawings.  Notably, the classical theorem of Tutte \cite{MR0081471} states that every 4-connected planar graph is Hamiltonian. Thus any 4-connected planar graph  has a near-perfect matching. Later, Nishizeki and Baybars \cite{MR0548625} gave lower bounds on matching number  for planar graphs as a function of the minimum degree and the connectivity. To be more specific, for example, every 3-connected planar graph with minimum degree 3 and order $n$ has a matching of size at least $\frac{n+4}{3}$ if $n\ge 14$.
In recent years, there has been growing interest in generalizations of planar graphs that allow some edge crossings, with particular attention to \emph{1-planar} graphs, which can be drawn in the plane so that each edge is crossed at most once. A graph together with a 1-planar drawing is a \emph{1-plane} graph. For instance, the 2017 survey on 1-planar graphs by Kobourov, Liotta, and Montecchiani \cite[p.~65]{MR3697129} mentioned as an open problem the development of matching results for 1-planar graphs analogous to those of Nishizeki and Baybars for planar graphs. In 2021, Biedl and Wittnebel \cite{MR4429150} investigated 1-planar graphs with minimum degree $\delta$ (for $\delta = 3, \ldots, 7$), providing lower bounds on the size of a matching; the bounds are tight for $\delta = 3, 4, 5$.  For $\delta=6$, the bounds were later improved (and made tight) by the last two authors in this paper and Dong \cite{MR4504132}.  
A graph $G$ from a family of graphs $\mathcal{G}$ is \emph{maximal} if joining any pair of nonadjacent vertices by an edge results in a graph that is not in $\mathcal{G}$.
 Recently, Biedl and Wittnebel \cite{MR4813988}   investigated the size of large matchings in maximal 1-planar graphs.  In particular they show that every 3-connected maximal 1-planar graph has a matching of size at least $\frac{2 n+6}{5}$; the bound decreases to $\frac{3 n+14}{10}$ if the graph need not be 3-connected. 
There are 1-planar graphs with connectivity 5 that have no near-perfect matchings \cite{Biedl}. 
Of particular interest is the following problem posed by Biedl and Wittnebel \cite{MR4429150} concerning matchings in highly connected 1-planar graphs. In fact,  the same problem was previously proposed by Fujisawa, Segawa and Suzuki \cite{MR3846896} for 5-connected 1-planar graphs.
\begin{prob}[Biedl and Wittnebel, \cite{MR4429150}]\label{prob:0}
Does every 6-connected (or 7-connected)  1-planar graph have a  near-perfect matching?
\end{prob}

 It is worth noting that  any 1-planar graph of order $n$ where $n \geq 3$ has size at most $4 n-8$  \cite{MR1606052}. Consequently, any 1-planar graph has a minimum degree of at most 7, and its connectivity is therefore also at most 7.  Meanwhile, the hamiltonicity of 1-planar graphs has also been gradually investigated.
A 1-planar graph of order $n$  is \emph{optimal} if it has exactly $4 n-8$ edges.  Hud\'{a}k, Madaras, and Suzuki \cite{MR2993519} showed that every optimal 1-planar graph, as well as every 7-connected maximal 1-planar graph, is Hamiltonian. Moreover, the hamiltonicity result for optimal 1-planar graphs (it is known that every optimal 1-planar graph is 4-connected and maximal \cite{MR2746706}) and 7-connected maximal 1-planar graphs was later extended to 4-connected maximal  1-planar  graphs \cite{MR4130388}. As a result, every 4-connected maximal 1-planar graph admits a near-perfect matching. In fact, more refined results were established in \cite{MR4130388}. To simplify notation and allow broader application, we revise some original terminology from \cite{MR4130388}. 
Let $D$ be a 1-planar drawing of a graph $G$, and let $ab$ and $cd$ be two edges that cross at a crossing $\alpha$ in $D$. The edges in $G[\{a, b, c, d\}] \backslash\{a b, c d\}$ are called the \emph{associated edges} of the crossing pair $a b$ and $c d$  or of the crossing $\alpha$. The crossing $\alpha$ of two edges $ab$ and $cd$ in a 1-planar drawing $D$ is called \emph{type-$k$} if the number of its associated edges is at least 
$k$.  Clearly,  $0\le k\le 4$. Furthermore, if a 1-planar graph $G$ admits a 1-planar drawing $D$ in which all crossings are type-$k$,  $D$ is called a \emph{type-$k$ 1-planar} drawing, and  $G$ is called a \emph{type-$k$ 1-planar} graph.  Clearly, any type-$k$ 1-planar graph is  type-$(k-1)$.    The crossing $\alpha$ of two edges $ab$ and $cd$ in a 1-planar drawing $D$ is called \emph{type-2A} if it has exactly two associated edges and the two edges form a matching. A type-2 1-plane graph $G$ is \emph{type-2A} if every crossing in $G$ with exactly two associated edges is of type-2A. For illustration, Figure \ref{fig:a} serves as an example.  Among the crossings in the figure, $\alpha_1$ is type-0, $\alpha_6$ is type-1, $\alpha_4$ is   type-3, and $\alpha_5$ is type-4. Regarding $\alpha_2$ and $\alpha_3$, they are  type-2. Specifically, $\alpha_2$ is  type-2A, while $\alpha_3$ is not type-2A. 
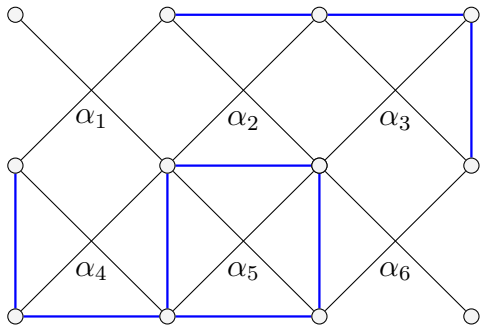
\begin{figure}[H]
\centering\begin{tikzpicture}[scale=0.5]
	\begin{pgfonlayer}{nodelayer}
		\node [style=whitenode] (0) at (-7, 2) {};
		\node [style=whitenode] (1) at (-3, 2) {};
		\node [style=whitenode] (2) at (-7, -2) {};
		\node [style=none] (6) at (-5, -0.75) {$\alpha_1$};
		\node [style=whitenode] (8) at (-3, -2) {};
		\node [style=whitenode] (9) at (1, 2) {};
		\node [style=whitenode] (10) at (1, -2) {};
		\node [style=whitenode] (11) at (1, -2) {};
		\node [style=whitenode] (12) at (-7, -6) {};
		\node [style=whitenode] (13) at (-3, -6) {};
		\node [style=whitenode] (14) at (1, -6) {};
		\node [style=none] (17) at (-1, -0.8) {$\alpha_2$};
		\node [style=none] (18) at (-5, -4.8) {$\alpha_4$};
		\node [style=none] (19) at (-1, -4.8) {$\alpha_5$};
		\node [style=whitenode] (20) at (5, 2) {};
		\node [style=whitenode] (23) at (5, -2) {};
		\node [style=whitenode] (24) at (5, -6) {};
		\node [style=none] (25) at (3, -0.8) {$\alpha_3$};
		\node [style=none] (26) at (3, -4.8) {$\alpha_6$};
	\end{pgfonlayer}
	\begin{pgfonlayer}{edgelayer}
		\draw (1) to (2);
		\draw (0) to (8);
		\draw (1) to (11);
		\draw (9) to (8);
		\draw [style=blueedge] (1) to (9);
		\draw (8) to (12);
		\draw (2) to (13);
		\draw [style=blueedge] (12) to (13);
		\draw (8) to (14);
		\draw (11) to (13);
		\draw [style=blueedge] (11) to (14);
		\draw [style=blueedge] (14) to (13);
		\draw  (20) to (11);
		\draw (9) to (23);
		\draw [style=blueedge] (8) to (11);
		\draw [style=blueedge] (9) to (20);
		\draw [style=blueedge] (20) to (23);
		\draw [style=blueedge] (8) to (13);
		\draw [style=blueedge] (2) to (12);
		\draw (23) to (14);
		\draw (11) to (24);
	\end{pgfonlayer}
\end{tikzpicture}
\caption{A 1-plane graph containing crossings of various types.}
\label{fig:a}
\end{figure}

Note that type-4 1-planar graphs correspond to locally maximal 1-planar graphs, while type-3 1-planar graphs correspond to weakly locally maximal 1-planar graphs, as discussed in \cite{MR4130388}.
Fabrici, Harant, Madaras, Mohr, Sot\'{a}k, and Zamfirescu \cite{MR4130388} showed that if a 4-connected graph admits a type-3 1-planar drawing in which at most three crossing pairs have exactly three associated edges, then the graph is Hamiltonian.
Moreover, Fabrici et al. \cite{MR4130388} constructed a class of 5-connected type-3 1-plane graphs with shortness exponent at most $\frac{\log 20}{\log 21}$; such a bound also implies that these graphs are non-Hamiltonian (see Theorem 2 in their paper for details).
Furthermore, Fabrici et al. \cite{MR4130388} left an open problem asking whether every 6-connected type-3 1-planar graph is Hamiltonian; the authors also proposed the more general problem asking whether every 6-connected 1-planar graph is Hamiltonian.  It should be noted that Hudák, Madaras, and Suzuki raised the same question in 2012 \cite{MR2993519}.

Within this line of research, once  Hamiltonian cycles or perfect matchings are known to exist in some classes of 1-planar graphs, such as optimal and 7-connected maximal 1-planar graphs, it is natural to investigate stronger properties, including matching extendability \cite{MR3846896, MR4606109,MR4715705}.
At a fundamental level, the existence of Hamiltonian cycles and near-perfect matchings thus constitutes one of the central problems in the study of 1-planar graphs.
In this paper, we go a step further by establishing a result on the existence of near-perfect matchings under a relaxed local crossing condition.

\begin{thm}\label{thm:1}
Let $G$ be a type-2A 1-planar graph. If $G$ is 6-connected, then $G$ has a near-perfect matching.
\end{thm}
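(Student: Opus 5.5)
We argue by contradiction through the Tutte--Berge formula, proving in fact that the scattering number is at most one: for every nonempty separating set $S$, the number of components of $G-S$ is at most $|S|+1$ (indeed at most $|S|$ when $S$ cuts). If $G$ has no near-perfect matching, Tutte--Berge gives a set $S\subseteq V(G)$ with $\odd(G-S)\ge |S|+2$. Hence $G-S$ has at least $|S|+2$ components. Since $G$ is 6-connected, either $S=\emptyset$, which is impossible because $G$ is connected, or $|S|\ge 6$ and every component of $G-S$ has at least $6$ neighbours in $S$. Write $s=|S|$ and let $c\ge s+2$ be the number of components $C_1,\dots,C_c$ of $G-S$.

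Next we pass to a bipartite planar auxiliary structure. Fix a type-2A 1-planar drawing $D$ of $G$. Contract each component $C_i$ to a single vertex $x_i$, delete the edges inside $S$, and keep exactly one edge from $x_i$ to each neighbour of $C_i$ in $S$. This gives a bipartite graph $H$ with parts $S$ and $X=\{x_1,\dots,x_c\}$, in which every $x_i$ has degree at least $6$. The aim is to show that $H$ is nearly planar, so that Euler's formula for bipartite planar graphs, $e(H)\le 2(s+c)-4$, can be applied. Combined with $e(H)\ge 6c$, this yields $6c\le 2s+2c-4$, that is, $c\le (s-2)/2<s+2$, a contradiction. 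This bound has a great deal of slack, and that slack is what will absorb the crossings.

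The main obstacle is handling crossings between edges of $H$. We classify each crossing of $D$ by its four endpoints. Consider two $S$--$X$ edges $ab$ and $cd$ that cross, with $a,c\in S$, $b\in C_i$ and $d\in C_j$.

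\emph{Case $i\neq j$.} The type-2A condition forces associated edges. If the crossing is type-3 or type-4, then at least three of the four pairs $ac,ad,bc,bd$ are edges, so $b$ is adjacent to $c$ or $d$ is adjacent to $a$ (because $bd$ cannot be an edge when $i\ne j$). If the crossing has exactly two associated edges forming a matching, these are $ac,bd$ or $ad,bc$, and since $bd$ is not an edge they must be $ad$ and $bc$. In every subcase, at least one component is adjacent to both $a$ and $c$. Such a crossing can therefore be removed by rerouting the edge to $a$ or $c$ along the associated edge inside the kite, or by deleting one edge of $H$ while retaining the neighbour through the associated edge.

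\emph{Case $i=j$.} The crossing is resolved by choosing representatives inside the contracted component, using the connectivity of $C_i$ to reroute.

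I expect this charging to be the main difficulty: one has to show that removing at most a bounded fraction of the edges, roughly $e(H)/3$, makes $H$ plane while each $x_i$ keeps degree at least $4$. For this step I would first try discharging on the planarization. Each crossing is charged to the associated edge $ad$ or $bc$, and one checks, using $\delta(x_i)\ge 6$ from 6-connectivity, that every $x_i$ loses at most two incident edges. The Euler bound with degree $4$ then gives $4c\le 2(s+c)-4$, so $c\le s-2$, still contradicting $c\ge s+2$.

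If the per-vertex charging fails, the fallback is a global count, $e(H)\le 2(s+c)-4+\mathrm{cr}$, where $\mathrm{cr}$ is the number of crossings of $D$ between $H$-edges. One then bounds $\mathrm{cr}\le e(H)/2$ using 1-planarity together with the 2A condition, which forces a planar associated edge for each crossing. This gives $e(H)\le 4(s+c)-8$, and combining it with $6c\le e(H)$ yields $c\le 2s-4$. That is insufficient on its own, so the per-vertex argument is the intended route.
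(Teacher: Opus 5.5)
\paragraph{Verdict.} The overall skeleton matches the paper's. You use Tutte--Berge, contract each component of $G-S$ to a vertex, and count edges of the resulting $S$--$X$ bipartite graph, using that each $x_i$ has at least $6$ neighbours in $S$. However, the argument has a genuine gap at exactly the step you flag.

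\paragraph{The unproved step.} The claim that $H$ can be made plane while every $x_i$ keeps degree at least $4$ is only a hope (``I would first try discharging''). You give no argument that such a balanced deletion exists. Your fallback via $\mathrm{cr}\le e(H)/2$ only reproduces the general $4n-8$ bound, which you rightly note is too weak. The missing idea is that planarity is not needed at all: a bipartite 1-planar graph on $n\ge 4$ vertices has at most $3n-8$ edges (Karpov). Once $H$ is known to be 1-planar, $6c\le e(H)\le 3(s+c)-8$ gives $c\le s-8/3$ immediately. Crossings between $S$--$X$ edges of different components (your Case $i\neq j$) can simply be left in place.

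\paragraph{The contraction step.} What then has to be proved is that $H$ is 1-planar, and your contraction step does not give this. Contracting an edge that is crossed in $D$ need not preserve 1-planarity; only uncrossed edges can be shrunk inside a small ``supernode''. Your Case $i=j$ (``choosing representatives \dots\ to reroute'') does not deal with internal edges of $C_i$ that are crossed by other edges of $C_i$, by $S$--$S$ edges, or by $S$--$C_i$ edges.

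\paragraph{How the paper fixes it.} The paper cleans up the drawing before contracting.
\begin{itemize}
\item Since $G$ is $4$-connected, $D$ is nice: the associated edges of any crossing are uncrossed.
\item Type-2A forbids an edge of $C_i$ from crossing an edge of $C_j$, or an $S$--$C_j$ edge, for $j\ne i$.
\item One deletes every $S$--$S$ or $S$--$C_i$ edge that crosses an edge of $C_i$, and one edge of each crossing pair inside $C_i$.
\item Type-2A and niceness guarantee that $C_i$ stays connected. If $ab$ is deleted because it crosses a kept edge $cd$ of $C_i$, then $a$ and $b$ each have an uncrossed associated edge to $\{c,d\}$.
\item They also guarantee that no $S$-neighbour is lost. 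If an edge $xy$ with $x\in V(C_i)$ and $y\in S$ crossed $ab\in E(C_i)$, then $ya$ or $yb$ is an uncrossed edge.
\end{itemize}
Now every edge of every component is uncrossed. Contracting a spanning tree of each component yields a bipartite 1-planar graph in which each $x_i$ still has at least $6$ neighbours, and Karpov's bound finishes the proof.
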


The following corollary is an immediate consequence of the above theorem.
\begin{cor}\label{cor:1}
Every 6-connected type-3 1-planar graph has a near-perfect matching.
\end{cor}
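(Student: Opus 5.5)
The plan is to use the Tutte--Berge formula. A graph $G$ of order $n$ has a matching of size $\lfloor n/2\rfloor$ if and only if $\odd(G-S)\le |S|+1$ for every $S\subseteq V(G)$. (Parity forces $\odd(G-S)\equiv n-|S| \pmod 2$, so the bound $\le|S|+1$ suffices.) As the abstract suggests, we will in fact prove the stronger statement that the scattering number is at most one: for every separating set $S$, the number $c(G-S)$ of components of $G-S$ satisfies $c(G-S)\le |S|+1$.

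So suppose, for contradiction, that $S$ is a vertex cut with $c:=c(G-S)\ge |S|+2$. Fix a type-2A 1-planar drawing of $G$. We form an auxiliary bipartite structure by contracting each component $C_1,\dots,C_c$ of $G-S$ to a single vertex, deleting all edges inside $S$, and keeping only one edge between each component-vertex and each vertex of $S$ it sees. Since $G$ is 6-connected, $N(C_i)\subseteq S$ and $|N(C_i)|\ge 6$, so each component-vertex has at least $6$ distinct neighbours in $S$. The resulting bipartite graph $H$, with parts $S$ and $\{C_i\}$, therefore has at least $6c$ edges.

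The core of the argument is to show that the type-2A condition makes $H$ behave essentially like a planar bipartite graph, which gives $e(H)\le 2(|S|+c)-4$. Combined with $6c\le 2|S|+2c-4$, this yields $c\le (|S|-2)/2$, contradicting $c\ge|S|+2$. Since this planar bound has a lot of slack, a weaker crossing-tolerant bound of roughly $e(H)\le 3(|S|+c)$ would also suffice, provided the constants still work out.

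The work lies in handling crossings. Consider a crossing between edges $ab$ and $cd$ where $a\in C_i$ and $c\in C_j$ lie in different components, or where one endpoint is in $S$. The associated edges of a type-3 or type-2A crossing join endpoints of the two crossing edges. No edge joins $C_i$ to $C_j$ for $i\neq j$, and a type-2A crossing has two associated edges forming a perfect matching of $\{a,b,c,d\}$ other than $\{ab,cd\}$. From this we will deduce that each crossing involving edges from two different components can be rerouted along an associated edge through $S$, or is between two edges both incident to the same vertex of $S$. The goal is to modify the drawing so that the contracted edges of $H$ become crossing-free after redrawing each component $C_i$ as a small region, connected because $C_i$ is connected, together with its chosen edges to $S$.

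Concretely, we will do the following. First, classify the crossings by how many endpoints lie in $S$ and whether the two crossing edges meet the same component. Second, for crossings between an edge of $C_i$ (or $C_i$--$S$) and an edge of $C_j$ (or $C_j$--$S$), use the type-2A associated edges to show that at least one of the two crossing edges is redundant for $H$: its $S$-endpoint is adjacent to the component through an uncrossed or otherwise available edge. Alternatively, the crossing can be bypassed by routing along an associated edge. Third, conclude that $H$ has a planar (or bounded-crossing) representation and apply Euler's formula for bipartite planar graphs.

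The main obstacle is this second step: making sure that rerouting along associated edges stays consistent and does not create new crossings. The hardest case is the type-2A crossings where the two associated edges are $ac$ and $bd$ with $a,d\in S$. Here one must argue carefully via the cyclic order of edges around the crossing point. If the direct planarization fails, the fallback is to bound by discharging instead. We would charge each component at least $6$ via its boundary faces, and show that each vertex of $S$ can absorb only a bounded amount, using the $4n-8$ edge bound for 1-planar graphs restricted to the crossing-free-ized subgraph.
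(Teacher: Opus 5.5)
Your framework matches the paper's. You use Tutte--Berge, reduce to $c(G-S)\le |S|+1$, contract each component, discard edges inside $S$, and compare $6c$ with an edge bound for the bipartite graph $H$. You also use, implicitly and correctly, that a type-3 drawing is vacuously type-2A. The gap is the edge bound itself, which carries the whole proof, and the planar route you propose for it cannot work.

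The type-2A condition, even type-3, does not make $H$ planar. In the paper's 5-connected type-3 graph $G_0$, let $S$ be the 20 black vertices. The 22 white vertices are then singleton components, each with all neighbours in $S$ and degree at least $5$. So $H$ has at least $110$ edges on $42$ vertices, whereas a planar bipartite graph on $42$ vertices has at most $2\cdot 42-4=80$.

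Your rerouting step is local: it uses only how crossings look under type-2A, and 6-connectivity enters only through the degree count. So it would apply equally to $G_0$, and therefore it must fail. Locally you can see why. Suppose $s_1a$ crosses $s_2b$ with $s_1,s_2\in S$, $a\in C_i$, $b\in C_j$. Type-2A forces $s_1b,s_2a\in E(G)$. But $s_1a$ may be the only edge from $C_i$ to $s_1$, and $s_2b$ the only edge from $C_j$ to $s_2$. Then neither crossing edge is redundant, and the crossing survives in $H$.

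Your fallback via the $4n-8$ bound is too weak. The inequality $6c\le 4(|S|+c)-8$ gives only $c\le 2|S|-4$, which is compatible with $c\ge |S|+2$ once $|S|\ge 6$. You remark that a bound of order $3(|S|+c)$ would suffice. That is exactly right, but you give no source for it.

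The missing idea is to aim only for 1-planarity of the contracted graph and use Karpov's bound $e\le 3n-8$ for bipartite 1-planar graphs (Lemma~\ref{biparite1_planar}). This gives $6c\le 3(|S|+c)-8$, i.e.\ $c\le |S|-8/3$.

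The real work is making contraction preserve 1-planarity. This requires every edge inside each component to be uncrossed before contracting, since contracting a crossed edge can destroy 1-planarity. The paper achieves this in four steps.
\begin{enumerate}
\item By 4-connectivity the drawing is nice, so associated edges are uncrossed.
\item By type-2A, an edge of $C_i$ can be crossed only by an $S$--$S$, $S$--$C_i$ or $C_i$--$C_i$ edge.
\item Delete every $S$--$S$ and $S$--$C_i$ edge that crosses an edge of $C_i$, and delete one edge from each crossing pair inside $C_i$.
\item Check two facts about the result.
\begin{itemize}
\item Each $C_i$ stays connected. If $ab$ is deleted and $cd$ kept, the uncrossed associated edges at $a$ and at $b$, together with $cd$, still join $a$ to $b$.
\item $N(C_i)\cap S$ is unchanged. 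A deleted edge $xs$ with $s\in S$ crossing $ab\in E(C_i)$ leaves the uncrossed associated edge $sa$ or $sb$.
\end{itemize}
\end{enumerate}
Only then does contracting spanning trees of the components give a 1-planar bipartite graph in which every component-vertex still has at least $6$ neighbours in $S$.
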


At present, researchers attempt to address Problem \ref{prob:0} (or Hamiltonicity) by imposing additional conditions beyond connectivity and 1-planarity, such as maximality \cite{Biedl}, type-4 \cite{MR4130388}, optimality \cite{MR2993519}, or chordality \cite{MR4933418}. Interestingly, under these conditions, the existence of near-perfect matchings or even Hamiltonicity can often be established without requiring 6-connectivity, with 4-connectedness typically sufficing. In contrast to these known results, 6-connectivity plays a critical role when the additional type-2A (or even type-3) condition is imposed. This is because there exist infinitely many type-3 1-planar graphs $G$ with connectivity 5 that admit no near-perfect matching. We follow the graph construction of Fabrici et al. \cite{MR4130388}, although their focus was on the upper bound of the circumference (i.e., the length of the longest cycle) of there graphs. Firstly, Fabrici et al. constructed a type-3 1-planar graph $G_0$ by taking the structure $H$ shown in Figure~\ref{fig:H}, adding a new white vertex, and joining its five half-edges to this vertex.  It is easy to check that $G_0$ is 5-connected.  Moreover, since $G_0$ contains 20 black vertices and 22 white vertices, and the set of white vertices is independent, it follows from Tutte-Berge  formula (see Lemma \ref{lem:tutteberge} in the next section) that $\alpha'(G_0)<\lfloor \frac{n(G_0)}{2}\rfloor$. Now, $G_0$ can be generalized to a class of type-3 1-planar graphs with connectivity 5. A feasible strategy is to adjust the two 20-vertex cycles in $H$ (Figure \ref{fig:H}, red and blue in the electronic version), extending them to length $4 k$ where $k\ge 5$ and adding specific edges to ensure the resulting graph meets the required properties.  We omit the technical details here for simplicity. Moreover, Biedl  \cite{Biedl}   also constructed 5-connected 1-planar graphs without near-perfect matchings. A direct verification shows that these 1-planar graphs are also type-3.

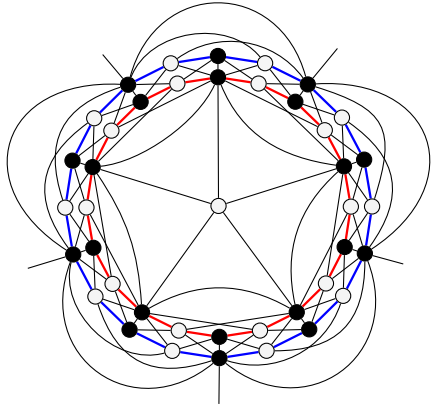
\begin{figure}
\centering
\begin{tikzpicture}[scale=0.4, rotate=8]
	\begin{pgfonlayer}{nodelayer}
		\node [style=whitenode] (1) at (2.39728, -2.72433) {};
		\node [style=blacknode] (2) at (0.82663, -2.75) {};
		\node [style=whitenode] (3) at (-0.676, -2.29541) {};
		\node [style=blacknode] (4) at (-1.48316, -0.89266) {};
		\node [style=whitenode] (5) at (-2.3111, 0.17674) {};
		\node [style=blacknode] (6) at (-2.7698, 1.45021) {};
		\node [style=whitenode] (7) at (-2.8089, 2.80427) {};
		\node [style=blacknode] (8) at (-2.4205, 4.10265) {};
		\node [style=whitenode] (9) at (-2.1454, 5.69783) {};
		\node [style=blacknode] (10) at (-0.55795, 6.02281) {};
		\node [style=whitenode] (11) at (0.72387, 6.45637) {};
		\node [style=blacknode] (12) at (2.07581, 6.47612) {};
		\node [style=whitenode] (13) at (3.67608, 6.7217) {};
		\node [style=blacknode] (14) at (4.96904, 5.83081) {};
		\node [style=whitenode] (15) at (5.92828, 4.58598) {};
		\node [style=blacknode] (16) at (6.45759, 3.10424) {};
		\node [style=whitenode] (17) at (6.5, 1.52964) {};
		\node [style=blacknode] (18) at (6.04753, 0.01992) {};
		\node [style=whitenode] (19) at (5.14512, -1.27193) {};
		\node [style=blacknode] (20) at (3.88565, -2.21687) {};
		\node [style=whitenode] (21) at (2.27598, -2.03085) {};
		\node [style=blacknode] (22) at (0.92399, -2.05071) {};
		\node [style=whitenode] (23) at (-0.36947, -1.65878) {};
		\node [style=blacknode] (24) at (-1.9692, -1.40502) {};
		\node [style=whitenode] (25) at (-2.9283, -0.16015) {};
		\node [style=blacknode] (26) at (-3.4576, 1.32158) {};
		\node [style=whitenode] (27) at (-3.5, 2.89616) {};
		\node [style=blacknode] (28) at (-3.0476, 4.40588) {};
		\node [style=whitenode] (29) at (-1.64259, 5.21236) {};
		\node [style=blacknode] (30) at (-0.88564, 6.64262) {};
		\node [style=whitenode] (31) at (0.60276, 7.1504) {};
		\node [style=blacknode] (32) at (2.17345, 7.1764) {};
		\node [style=whitenode] (33) at (3.36961, 6.08511) {};
		\node [style=blacknode] (34) at (4.48346, 5.31927) {};
		\node [style=whitenode] (35) at (5.3111, 4.24959) {};
		\node [style=blacknode] (36) at (5.76922, 2.97592) {};
		\node [style=whitenode] (37) at (5.80782, 1.62191) {};
		\node [style=blacknode] (38) at (5.4193, 0.3237) {};
		\node [style=whitenode] (39) at (4.64234, -0.78642) {};
		\node [style=blacknode] (40) at (3.5575, -1.59708) {};
		\node [style=none] (84) at (-1.575, 7.75) {};
		\node [style=none] (85) at (6.1, 6.875) {};
		\node [style=none] (86) at (7.25, -0.5) {};
		\node [style=none] (87) at (0.6, -4.25) {};
		\node [style=none] (88) at (-5, 1.075) {};
		\node [style=whitenode] (89) at (1.5, 2.25) {};
		\node [style=none] (90) at (2.75, 7.25) {};
		\node [style=none] (91) at (2.5, 5.75) {};
		\node [style=none] (92) at (0.75, 7.5) {};
		\node [style=none] (93) at (1, 5.75) {};

	\end{pgfonlayer}
	\begin{pgfonlayer}{edgelayer}
		\draw (1) to (22);
		\draw (1) to (40);
		\draw (2) to (21);
		\draw (2) to (22);
		\draw (2) to (23);
		\draw (3) to (4);
		\draw (3) to (22);
		\draw (4) to (24);
		\draw (4) to (25);
		\draw (5) to (24);
		\draw (5) to (26);
		\draw (6) to (25);
		\draw (6) to (26);
		\draw (6) to (27);
		\draw (7) to (26);
		\draw (7) to (28);
		\draw (8) to (9);
		\draw (8) to (27);
		\draw (8) to (28);
		\draw (9) to (10);
		\draw (10) to (30);
		\draw (10) to (31);
		\draw (11) to (30);
		\draw (11) to (32);
		\draw (12) to (13);
		\draw (12) to (31);
		\draw (12) to (32);
		\draw (13) to (34);
		\draw (14) to (33);
		\draw (14) to (34);
		\draw (14) to (35);
		\draw (15) to (34);
		\draw (15) to (36);
		\draw (16) to (35);
		\draw (16) to (36);
		\draw (16) to (37);
		\draw (17) to (36);
		\draw (17) to (38);
		\draw (18) to (37);
		\draw (18) to (38);
		\draw (18) to (39);
		\draw (19) to (38);
		\draw (19) to (40);
		\draw (20) to (21);
		\draw (20) to (39);
		\draw (20) to (40);
		\draw (23) to (24);
		\draw (28) to (29);
		\draw (29) to (30);
		\draw (32) to (33);
		\draw (33) to (36);
		\draw (35) to (12);
		\draw (12) to (29);
		\draw (11) to (8);
		\draw [bend right, looseness=0.75] (8) to (12);
		\draw [in=165, out=-75] (12) to (36);
		\draw (36) to (39);
		\draw (40) to (37);
		\draw [bend right] (36) to (40);
		\draw (40) to (23);
		\draw (4) to (21);
		\draw [bend left] (4) to (40);
		\draw (8) to (5);
		\draw (7) to (4);
		\draw [bend left=15] (8) to (4);
		\draw [bend left=75, looseness=1.50] (30) to (14);
		\draw [bend left=60] (30) to (13);
		\draw [bend right=45] (14) to (31);
		\draw [bend left=285, looseness=1.75] (30) to (26);
		\draw [bend left=45, looseness=1.25] (26) to (9);
		\draw [bend right=45, looseness=1.25] (30) to (27);
		\draw [bend right=75, looseness=1.50] (26) to (2);
		\draw [bend right=60] (26) to (3);
		\draw [bend right=315, looseness=1.25] (2) to (25);
		\draw [bend right=75, looseness=1.50] (2) to (18);
		\draw [bend right=45] (2) to (19);
		\draw [bend left=45, looseness=1.25] (18) to (1);
		\draw [bend right=75, looseness=1.75] (18) to (14);
		\draw [bend right=45, looseness=1.25] (18) to (15);
		\draw [bend left=45, looseness=1.50] (14) to (17);
		\draw  (84.center) to (30);
		\draw  (14) to (85.center);
		\draw  (86.center) to (18);
		\draw  (2) to (87.center);
		\draw  (26) to (88.center);
		\draw (8) to (89);
		\draw (89) to (4);
		\draw (89) to (40);
		\draw (89) to (36);
		\draw (89) to (12);
		\draw [style=rededge] (10) to (29);
		\draw [style=rededge] (29) to (8);
		\draw [style=rededge] (8) to (7);
		\draw [style=rededge] (7) to (6);
		\draw [style=rededge] (6) to (5);
		\draw [style=rededge] (5) to (4);
		\draw [style=rededge] (4) to (23);
		\draw [style=rededge] (23) to (22);
		\draw [style=rededge] (22) to (21);
		\draw [style=rededge] (21) to (40);
		\draw [style=rededge] (40) to (39);
		\draw [style=rededge] (39) to (38);
		\draw [style=rededge] (38) to (37);
		\draw [style=rededge] (37) to (36);
		\draw [style=rededge] (36) to (35);
		\draw [style=rededge] (35) to (34);
		\draw [style=rededge] (34) to (33);
		\draw [style=rededge] (33) to (12);
		\draw [style=rededge] (12) to (11);
		\draw [style=rededge] (11) to (10);
		\draw [style=blueedge] (31) to (32);
		\draw [style=blueedge] (32) to (13);
		\draw [style=blueedge] (13) to (14);
		\draw [style=blueedge] (14) to (15);
		\draw [style=blueedge] (15) to (16);
		\draw [style=blueedge] (16) to (17);
		\draw [style=blueedge] (17) to (18);
		\draw [style=blueedge] (18) to (19);
		\draw [style=blueedge] (19) to (20);
		\draw [style=blueedge] (20) to (1);
		\draw [style=blueedge] (1) to (2);
		\draw [style=blueedge] (2) to (3);
		\draw [style=blueedge] (3) to (24);
		\draw [style=blueedge] (24) to (25);
		\draw [style=blueedge] (25) to (26);
		\draw [style=blueedge] (26) to (27);
		\draw [style=blueedge] (27) to (28);
		\draw [style=blueedge] (28) to (9);
		\draw [style=blueedge] (9) to (30);
		\draw [style=blueedge] (30) to (31);
	\end{pgfonlayer}
\end{tikzpicture}

\caption{The structure $H$ (with the red and blue cycles highlighted in the electronic version)}
\label{fig:H}
\end{figure}


\noindent \textbf{Outline of the paper.}
The rest of the paper is structured as follows. In the rest of the section,  we introduce notations and terminologies. In Section 2, we present preliminary lemmas, focusing mainly on the properties of type-2A 1-planar graphs. In Section 3, we show  Theorem \ref{thm:1}.

\noindent \textbf{Notations and terminologies.}  
Let $G$ be a graph.
Let $v\in V(G)$ and $S\subseteq V(G)$. 
Then  $N_G(v)$   denotes the set of neighbors
of $v$ in $G$.  For $S \subseteq V(G)$, let $N_G(S)=\left(\bigcup_{x \in S} N_G(x)\right) \backslash S$.
The subgraphs of $G$ induced on $S$ and $V(G)\setminus S$ are denoted by
 $G[S]$ and $G-S$, respectively.
Let $V_1,
V_2\subseteq V(G)$ be two disjoint vertex sets. Then $N_G(V_1, V_2) = \{ v \in V_2 \mid \text{$v$ is adjacent to some vertex in $V_1$} \}$.
 Then $E_G(V_1,V_2)$ is the set
of edges in $G$  with one endvertex in $V_1$ and the other endvertex in $V_2$ and  $e_G(V_1,V_2):=|E_G(V_1,V_2)|$. 
Let $X \subseteq E(G)$ be a set of edges. We denote by $G - X$ the graph obtained from $G$ by removing all edges in $X$, i.e.,
$
G - X := (V(G), E(G) \setminus X).
$
Let $G-V_0$ denote the subgraph of $G$ induced by $V(G) \backslash V_0$ when $V_0 \neq V(G)$. A subset $S$ of $V(G)$ is called a \emph{vertex-cut} if $G-S$ is disconnected, and a \emph{vertex-cut} $S$ is called a $k$-vertex-cut if $|S|=k$. For a noncomplete graph $G$, its \emph{connectivity}, denoted by $\kappa(G)$, is defined to be the minimum value of $|S|$ over all vertex-cuts $S$ of $G$, and $\kappa(G)=n(G)-1$ if $G$ is complete and $n(G)\ge 2$. A graph $G$ is \emph{$k$-connected} if $\kappa(G) \geq k$. 

A \emph{component} of a graph $G$ is a maximal connected subgraph of $G$. An \emph{odd component} is a component that contains an odd number of vertices. For a subset $S \subseteq V(G)$, denote by $c(G - S)$ the number of components in the graph $G - S$, and by $\odd(G - S)$ the number of odd components in $G - S$.

A \emph{drawing} of a graph $G$ is a mapping that assigns to each vertex of $G$ a distinct point
in the plane and to each edge a continuous arc connecting its endpoints. A drawing is  \emph{good}
if no edge crosses itself, no two edges cross more than once, and no two edges incident with the same vertex cross each other. In this paper, all drawings are assumed to be good. In a graph drawing, an edge is \emph{crossed} if it intersects another edge at a crossing (i.e., a point of intersection other than their common endpoints); otherwise, it is \emph{uncrossed}. A drawing of a graph in the plane is called \emph{1-planar} if each edge is crossed
at most once. Let $G$ be a graph with a drawing $D$.
Let $H$ be a subgraph of $G$. The subdrawing $\left.D\right|_H$ of $H$ induced by $D$ is called a \emph{restricted drawing }of $D$.

\section{Several technical lemmas}


In this section we give some lemmas. The first one is the well-known Tutte-Berge Formula, which gives a formula on the size of a maximum matching of a graph. 

\begin{lem}[Tutte-Berge \cite{MR0100850}]\label{lem:tutteberge} 
The size of a maximum matching  of a graph $G$ of order $n$ equals the minimum, over all $S \subseteq V(G)$, of $\frac{1}{2}(n-(\odd(G- S)-|S|))$.
\end{lem}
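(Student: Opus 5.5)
The plan is to prove the two inequalities separately. The easy one is that every matching is bounded by each quantity $\frac12(n-(\odd(G-S)-|S|))$. The harder one is that some $S$ attains the maximum matching size; I will obtain it by reducing to Tutte's 1-factor theorem \cite{MR0023048}, which I take as known.

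For the easy direction, fix a matching $M$ and a set $S\subseteq V(G)$. Each odd component $C$ of $G-S$ has an odd number of vertices, so $M$ cannot match all of $C$ inside $C$. Hence some vertex of $C$ is either unmatched or matched to a vertex of $S$, since $C$ has no edges to other components. Distinct odd components give distinct such vertices, and at most $|S|$ of them can be matched into $S$. So at least $\odd(G-S)-|S|$ vertices are left uncovered by $M$. This gives $2|M|\le n-(\odd(G-S)-|S|)$ for every $S$, and therefore $\alpha'(G)$ is at most the minimum.

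For the reverse inequality, set $d=\max_{S}(\odd(G-S)-|S|)$. Taking $S=\emptyset$ shows $d\ge \odd(G)\ge 0$, and a parity count shows $d\equiv n \pmod 2$. Form $G^{*}$ by adding a set $T$ of $d$ new vertices, each adjacent to every vertex of $G$ and to each other; then $n(G^{*})=n+d$ is even. I will check that $G^{*}$ satisfies Tutte's condition $\odd(G^{*}-S^{*})\le |S^{*}|$ for every $S^{*}\subseteq V(G^{*})$.
\begin{enumerate}
\item If $T\not\subseteq S^{*}$, then $G^{*}-S^{*}$ is connected (or empty), because a surviving vertex of $T$ is adjacent to all remaining vertices. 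So there is at most one odd component, and parity of $n+d-|S^{*}|$ handles the case $S^{*}=\emptyset$.
\item If $T\subseteq S^{*}$, write $S^{*}=T\cup S$. Then $\odd(G^{*}-S^{*})=\odd(G-S)\le |S|+d=|S^{*}|$ by the choice of $d$.
\end{enumerate}
Thus $G^{*}$ has a perfect matching. Deleting the at most $d$ edges of that matching that meet $T$ leaves a matching of $G$ covering at least $n-d$ vertices. This gives $\alpha'(G)\ge \frac12(n-d)$, which is the minimum in the statement.

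The main obstacle is the case analysis in the Tutte-condition verification for $G^{*}$. There I need care with the parity bookkeeping (that $d\equiv n$, and the case $S^{*}=\emptyset$). Everything else is routine counting. If one prefers not to assume Tutte's theorem, the alternative is to prove the hard direction directly through the Gallai--Edmonds decomposition or Edmonds' blossom argument. That route would be substantially longer, and citing \cite{MR0100850} is the natural choice in this paper.
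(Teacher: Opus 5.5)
Your proof is correct. The paper gives no proof of this lemma; it imports the Tutte--Berge formula from Berge's paper as a black box.

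The paper uses both directions. The easy direction appears in the introduction, to show that $G_0$ has $\alpha'(G_0)<\lfloor n(G_0)/2\rfloor$: its 22 white vertices are independent and there are only 20 black ones. The hard direction appears in the proof of Theorem~\ref{thm:1}.

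What you supply is the standard reduction to Tutte's 1-factor theorem. First, a counting argument shows every matching leaves at least $\odd(G-S)-|S|$ vertices uncovered. Then you pad $G$ with a clique $T$ of $d=\max_S(\odd(G-S)-|S|)$ universal vertices. Since $d\ge 0$ and $d\equiv n \pmod 2$, the order $n(G^{*})$ is even. Your two cases $T\not\subseteq S^{*}$ and $T\subseteq S^{*}$ correctly verify Tutte's condition. Discarding the at most $d$ matching edges at $T$ then leaves at least $\frac12(n-d)$ edges inside $G$.

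The citation buys brevity for a classical tool. Your route makes the lemma self-contained modulo Tutte's theorem, which the paper already cites (\cite{MR0023048}), so it assumes nothing the paper does not already rely on.

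One cosmetic point: in your first case $G^{*}-S^{*}$ is never empty, since some vertex of $T$ survives.
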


\begin{lem}[Karpov \cite{zbMATH06347737}]\label{biparite1_planar}
Let $G$ be a bipartite and 1-planar graph of order $n\ge 4$. Then $e(G)\le 3n-8$.
\end{lem}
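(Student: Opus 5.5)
Since the claim is Karpov's bound, the natural route is to fix a 1-planar drawing $D$ of $G$ and do Euler-type counting on the planarization $G^\times$. We may assume that $G$ is connected and edge-maximal among bipartite graphs with the given bipartition that admit a 1-planar drawing, since adding edges only strengthens the conclusion. We also take $D$ to minimize the number $c$ of crossings. Then $G^\times$, obtained by replacing each crossing with a degree-4 dummy vertex, is a plane graph with $n+c$ vertices and $e+2c$ edges. By Euler's formula it has $f=e+c-n+2$ faces.

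\textbf{Step 1 (triangular faces).} We classify the faces of $G^\times$ by length. A face containing no dummy vertex is bounded by a closed walk of $G$, so it has even length, hence length $\ge 4$. Consider a crossing of $ab$ and $cd$ with the bipartition $a\in X$, $b\in Y$, and, say, $c\in X$, $d\in Y$. The four angular sectors at the crossing lie between the consecutive pairs $(a,c)$, $(c,b)$, $(b,d)$, $(d,a)$ in the rotation. A triangular face in a sector requires an edge between the two endpoints bounding it. The pairs $(a,c)$ and $(b,d)$ lie in the same class, so only two of the four sectors can host a triangle. A triangle can never contain two dummy vertices, because two crossings cannot be joined by a segment that also meets a vertex of $G$ in a face of length 3. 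Hence the number $t$ of triangular faces satisfies $t\le 2c$, and every other face has length $\ge 4$. Counting face--edge incidences gives
\[
2(e+2c)\ \ge\ 4f - t\ \ge\ 4(e+c-n+2)-2c,
\]
that is, $e\le 2n-4+c$. This matches the trivial bound obtained by deleting one edge per crossing, which leaves a planar bipartite graph with $e-c\le 2n-4$.

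\textbf{Step 2 (bounding the crossings, the main obstacle).} It remains to prove $c\le n-4$. This is where the real work lies. My first attempt would be a refined face count. In a crossing-minimal, edge-maximal drawing, a crossing that does not carry both possible triangles can be charged to a face of length $\ge 5$. Such a face contributes at least one extra unit of slack to the inequality above, and that slack should absorb the excess. Concretely, I would assign charge $\deg(v)-4$ to each vertex of $G^\times$ and $\ell(F)-4$ to each face $F$, so the total charge is $-8$. I would then redistribute charge so that every real vertex ends with at least $-\tfrac{\deg_G(v)}{?}$ in a form whose sum yields $e\le 3n-8$. The key local claim to verify is the following: if both sectors $(c,b)$ and $(d,a)$ of a crossing are triangles, then the kite $a,c,b,d$ forces the faces in the sectors $(a,c)$ and $(b,d)$ to have length $\ge 5$. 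This holds because those faces must leave the same-class pair through further edges, and a dummy vertex adjacent to both $a$ and $c$ adds odd length.

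\textbf{Step 3 (fallback if the discharging is messy).} As a fallback, I would bound $c$ directly. Each crossing pair $\{ab,cd\}$ yields the two cross-class kite edges $cb$ and $da$. Using maximality and crossing-minimality, I would argue that these kite edges can be added or rerouted without crossings along the sides of the crossing. Taking one crossed edge from each pair together with all the planar edges, we obtain a plane bipartite graph. Rather than inserting dummy vertices, we then assign to each crossing a distinct face of that plane bipartite graph, namely the quadrilateral $a,c,b,d$ split by the deleted edge. Since a plane bipartite graph on $n$ vertices has at most $n-2$ faces, and at least two faces cannot host crossings, this gives $c\le n-4$. Combined with $e-c\le 2n-4$, it yields $e\le 3n-8$. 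The injectivity of the crossing-to-face assignment is the delicate point.
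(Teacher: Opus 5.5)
The paper gives no proof of this lemma; it is quoted from Karpov, so your proposal has to stand on its own. Step 1 is fine: Euler's formula on the planarization, together with $t\le 2c$, gives $e\le 2n-4+c$. The gap is in what you build on it. Steps 2 and 3 both aim at $c\le n-4$, and this inequality is false, even for edge-maximal graphs in crossing-minimal drawings.

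Take $G=K_{3,6}$, with $n=9$ and $e=18\le 3n-8$. It is 1-planar. Put $x_1,x_2,x_3$ on a horizontal line and draw $y_1$ high above $x_2$ with straight edges. Put $y_2$ inside the triangle $x_1x_2y_1$, with $y_2x_3$ crossing $y_1x_2$. Put $y_3$ in the face bounded by $x_1y_2$, $y_1x_1$ and parts of $y_2x_3$ and $y_1x_2$. Let $y_3x_2$ cross $x_1y_2$, and let $y_3x_3$ cross $y_1x_1$ and then pass over $y_1$. Mirror this drawing below the line.

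Yet every drawing of $K_{3,6}$ has at least $6$ crossings. A crossing involves two of the $y_j$, so it survives in $4$ of the $6$ copies of $K_{3,5}$. The same averaging, started from $\mathrm{cr}(K_{3,3})\ge 1$, gives $\mathrm{cr}(K_{3,4})\ge 2$, $\mathrm{cr}(K_{3,5})\ge 4$ and $\mathrm{cr}(K_{3,6})\ge 6$. Since $K_{3,6}$ is complete bipartite, neither of your normalisations excludes it, yet $c\ge 6>5=n-4$. So no discharging can reach the target of Step 2, and the injection of crossings into all but two faces of $G'$ in Step 3 cannot exist.

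The individual steps also fail locally.
\begin{itemize}
\item The key claim of Step 2 is false. Suppose $ab$ and $cd$ cross at $\alpha$ with kite edges $cb,da$, and a vertex $x$ adjacent to $a$ and $c$ is drawn in the sector $(a,c)$. Then that sector is the $4$-face $\alpha a x c$.
\item Your parity remark is also wrong: a second crossing $\beta$ with $a$ and $c$ among its endpoints gives the $4$-face $\alpha a\beta c$.
\item The discharging rule is never actually specified.
\item In Step 3, $a,c,b,d$ is not a cycle of $G$, because $ac$ and $bd$ join vertices of the same class. The only $4$-cycle on these vertices is $abcd$, which uses both crossing edges. After deleting one of them there is no quadrilateral to assign.
\end{itemize}

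What is really needed is to let crossings beyond $n-4$ be paid for by the slack in your Step 1. For connected $G$ your count is the identity
\[
2e \;=\; 4n-8+t-\sum_{F:\,\ell(F)\ge 5}\bigl(\ell(F)-4\bigr).
\]
So the lemma is equivalent to $t-\sum_{\ell(F)\ge 5}(\ell(F)-4)\le 2n-8$. This is a bound on the kite triangles corrected by long faces, not a bound on $c$ alone. In $K_{3,6}$ this left-hand side equals $8$, while $2c\ge 12$.
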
 


Let $G$ be a graph and $uv \in E(G)$. The \emph{contraction} of $uv$ in $G$ results in a new graph, denoted by $G / uv$,  in which the vertices $u$ and $v$ are identified as a single vertex, and all edges incident to either $u$ or $v$ are reassigned to this new vertex, with any resulting self-loops and multiple edges deleted.  The contraction of an edge in a graph is a standard graph operation. For a 1-planar drawing $D$ of a 1-planar graph $G$, there exists a natural geometric interpretation of contracting an uncrossed edge $uv$ in $D$, which preserves 1-planarity: The nodes $u$ and $v$, together with the line $uv$, can be regarded as a single ``supernode'', represented by the gray region on the left side of Fig.~\ref{Contraction}, whose boundary is infinitesimally close to $u$, $v$, and the line $uv$, ensuring that no other vertices or crossing points lie inside the supernode. Contracting an uncrossed edge $u v$ corresponds to merging the points representing $u$ and $v$ in the plane into a single vertex $w$, which is placed at the original position of $u$, and reattaching the edges incident with $u$ and $v$ to $w$ within the drawing. These contraction operation can be carried out within the geometric region of the supernode introduced above (shown in the right side of  Figure~\ref{Contraction}), and thus  edges incident with $u$ or $v$ remain crossed at most once, while all other edges not incident with $u$ or $v$ remain unchanged. Therefore, this geometric operation of contracting an uncrossed edge introduces no new crossings and thus preserves the 1-planarity of the drawing.

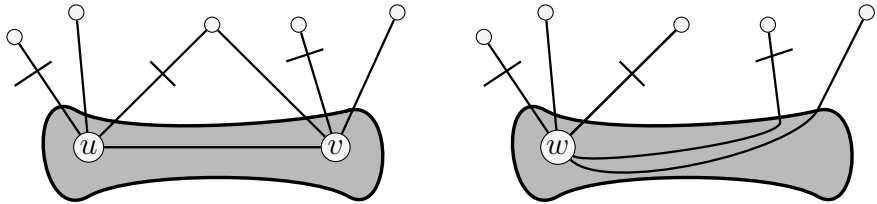
\begin{figure}[H]
\centering
\begin{tikzpicture}[scale=0.65]
	\begin{pgfonlayer}{nodelayer}
		\node [style=whitenode] (0) at (-5, 0) {$u$};
		\node [style=whitenode] (1) at (0, 0) {$v$};
		\node [style=none] (2) at (-5.25, 0.75) {};
		\node [style=none] (3) at (-5, -1) {};
		\node [style=none] (4) at (0.25, 0.75) {};
		\node [style=none] (5) at (0.25, -1) {};
		\node [style=whitenode] (6) at (-6.5, 2.25) {};
		\node [style=whitenode] (7) at (-5.25, 2.75) {};
		\node [style=whitenode] (8) at (-0.75, 2.5) {};
		\node [style=whitenode] (9) at (1.25, 2.75) {};
		\node [style=none] (10) at (-1, 1.75) {};
		\node [style=none] (11) at (-0.25, 2) {};
		\node [style=whitenode] (12) at (-2.5, 2.5) {};
		\node [style=none] (13) at (-3.75, 1.75) {};
		\node [style=none] (14) at (-3.25, 1.25) {};
		\node [style=whitenode] (15) at (4.5, 0) {$u$};
		\node [style=whitenode] (16) at (4.5, 0) {$w$};
		\node [style=none] (17) at (4.25, 0.75) {};
		\node [style=none] (18) at (4.5, -1) {};
		\node [style=none] (19) at (9.75, 0.75) {};
		\node [style=none] (20) at (9.75, -1) {};
		\node [style=whitenode] (21) at (3, 2.25) {};
		\node [style=whitenode] (22) at (4.25, 2.75) {};
		\node [style=whitenode] (23) at (8.75, 2.5) {};
		\node [style=whitenode] (24) at (10.75, 2.75) {};
		\node [style=none] (25) at (8.5, 1.75) {};
		\node [style=none] (26) at (9.25, 2) {};
		\node [style=whitenode] (27) at (7, 2.5) {};
		\node [style=none] (28) at (5.75, 1.75) {};
		\node [style=none] (29) at (6.25, 1.25) {};
		\node [style=none] (30) at (3, 1.25) {};
		\node [style=none] (31) at (3.75, 1.75) {};
		\node [style=none] (33) at (9, 0.5) {};
		\node [style=none] (34) at (-6.5, 1.25) {};
		\node [style=none] (35) at (-5.75, 1.75) {};
	\end{pgfonlayer}
	\begin{pgfonlayer}{edgelayer}
		\draw [style={shadow_silver}] (4.center)
			 to [in=-30, out=15, looseness=1.50] (5.center)
			 to [in=390, out=150, looseness=0.50] (3.center)
			 to [in=150, out=-150, looseness=1.75] (2.center)
			 to [in=195, out=-30, looseness=0.50] cycle;
		\draw [style=blackedge] (0) to (1);
		\draw [style=blackedge] (6) to (0);
		\draw [style=blackedge] (0) to (7);
		\draw [style=blackedge] (8) to (1);
		\draw [style=blackedge] (1) to (9);
		\draw [style=blackedge] (10.center) to (11.center);
		\draw [style=blackedge] (12) to (0);
		\draw [style=blackedge] (12) to (1);
		\draw [style=blackedge] (13.center) to (14.center);
		\draw [style={shadow_silver}] (19.center)
			 to [in=-30, out=15, looseness=1.50] (20.center)
			 to [in=390, out=150, looseness=0.50] (18.center)
			 to [in=150, out=-150, looseness=1.75] (17.center)
			 to [in=195, out=-30, looseness=0.50] cycle;
		\draw [style=blackedge] (15) to (16);
		\draw [style=blackedge] (21) to (15);
		\draw [style=blackedge] (15) to (22);
		\draw [style=blackedge] (25.center) to (26.center);
		\draw [style=blackedge] (27) to (15);
		\draw [style=blackedge] (27) to (16);
		\draw [style=blackedge] (28.center) to (29.center);
		\draw [style=blackedge] (31.center) to (30.center);
		\draw [style=blackedge] (24) to (19.center);
		\draw [style=blackedge, in=-45, out=-120, looseness=0.50] (19.center) to (16);
		\draw [style=blackedge] (23) to (33.center);
		\draw [style=blackedge, in=-30, out=-105, looseness=0.25] (33.center) to (16);
		\draw [style=blackedge] (35.center) to (34.center);
	\end{pgfonlayer}
\end{tikzpicture}

\caption{Contraction of an uncrossed edge $uv$ inside a supernode (gray region)}
\label{Contraction}
\end{figure}

The following lemma follows readily from the above analysis (note that it may not hold for $e$ being a crossed edge).
\begin{lem}\label{keep1_planar}
Let $G$ be a $1$-plane graph. If $e$ is an uncrossed edge, then  $G / e$ is also $1$-plane.
\end{lem}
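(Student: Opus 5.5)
The plan is to make the geometric description given just before the statement rigorous. We fix the 1-planar drawing $D$ of $G$ and an uncrossed edge $e=uv$. Since $e$ is uncrossed, the arc representing $uv$ meets no other edge except at its endpoints. Because $D$ is a drawing, this arc also avoids all other vertices and all crossing points. By compactness we can therefore choose a closed neighbourhood $R$ of the arc $uv$, a thin topological disc (the ``supernode''). We take $R$ to contain no vertex other than $u,v$ and no crossing point. We also require that every edge meeting $R$ is incident with $u$ or $v$, and that each such edge meets $R$ in a single initial segment starting at its endpoint in $\{u,v\}$. This is possible because edges not incident with $u,v$ have positive distance from the arc. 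For edges incident with $u$ or $v$, shrinking $R$ suffices, since near a vertex the incident arcs emanate as distinct curves.

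We then define the new drawing. We place $w$ at the position of $u$. Each edge $xu$ keeps its arc. For each edge $xv$ with $x\neq u$, we keep its arc outside $R$. Inside $R$ we reroute it from the point where it enters $\partial R$ to $w$. All these rerouted segments must be pairwise internally disjoint and lie in $R$. This can be done because $R$ is a disc: the entry points on $\partial R$ occur in a cyclic order, and we join them to $w$ by non-crossing curves inside the disc, as in a star. The arc $uv$ itself is deleted. If $x$ is adjacent to both $u$ and $v$, we keep only one of the two edges $xw$ and delete the other. Deleting edges never creates crossings.

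Finally we verify the properties. Inside $R$ there are no crossings at all. Outside $R$ the drawing is unchanged, apart from deleted edges. Hence every edge of $G/e$ is crossed by exactly the edges it was crossed by before, restricted to those that survive, and so it is crossed at most once. It remains to check goodness. Two edges incident with $w$ cannot cross: if they were originally incident with the same vertex this follows from goodness of $D$. The other case is $xu$ crossing $yv$, with crossing point outside $R$, and this needs an argument. Here we note that such a crossing would make $xw$ and $yw$ adjacent crossing edges. We can remove this crossing by the standard local redrawing trick: swap the two segments from the crossing point to $w$. This reduces the number of crossings and keeps each edge crossed at most once. Alternatively, we simply observe that the paper only requires the result to be 1-plane, and the standard uncrossing keeps that property.

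The main obstacle is this goodness issue (crossing of $xu$ and $yv$), together with the careful choice of $R$. Everything else is routine.
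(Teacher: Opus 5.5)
Your proposal is correct and is essentially the paper's argument made rigorous. The paper also takes a thin ``supernode'' region around the uncrossed arc $uv$ that contains no other vertices or crossing points, places $w$ at $u$, and reattaches the edges inside that region, so no new crossings arise. (Your requirement that each edge meet $R$ in a single initial segment implicitly assumes tame, e.g.\ polygonal, arcs, which is standard.) The extra case you handle is an original crossing of $xu$ with $yv$, which becomes a crossing of the adjacent edges $xw$ and $yw$; the paper addresses this only later, in the proof of Proposition~\ref{prop:main}, by a similar local redrawing.
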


A 1-planar drawing 
$D$ is \emph{nice} if for every pair of crossing edges in  $D$ all of their associated edges are uncrossed.

\begin{remark}\label{remark:kite}
Let $G$ be a graph with a 1-planar drawing $D$.  If $D$ is nice, then 
for any subgraph $H$  of $G$, $D|_H$ is nice. 
\end{remark}

\begin{lem}\label{lem:4_connected_kite}
Let $G$ be a graph with a 1-planar drawing $D$.  If $G$ is 4-connected, then $D$ is nice.
\end{lem}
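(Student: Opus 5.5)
The plan is a proof by contradiction: assume some associated edge is crossed, and use the Jordan curve theorem to find a vertex-cut of size $3$.

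Let $ab$ and $cd$ cross at $\alpha$ in $D$, and suppose some associated edge is crossed. By symmetry we may take it to be $ac$ (the cases $ad$, $bc$, $bd$ are identical after relabelling). Since $D$ is 1-planar, $ac$ is crossed by exactly one edge $xy$. Because $D$ is good, $\{x,y\}\cap\{a,c\}=\emptyset$; note that $x$ or $y$ may coincide with $b$ or $d$.

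Consider the closed curve $C$ made of three arcs: the segment of $ab$ from $a$ to $\alpha$, the segment of $cd$ from $\alpha$ to $c$, and the edge $ac$. We record which edges can meet $C$.
\begin{itemize}
\item The edges $ab$ and $cd$ are already crossed once, by each other, so no other edge crosses the two half-segments.
\item The edge $ac$ is crossed only by $xy$.
\end{itemize}
Hence the only points where an edge not incident with $a$ or $c$ meets $C$ lie on the single crossing of $xy$ with $ac$. Consequently $xy$ is the only edge of $G$ joining a vertex strictly inside $C$ to a vertex strictly outside $C$, apart from edges incident with $a$ or $c$. Also, $xy$ meets $C$ exactly once, so $x$ and $y$ lie on opposite sides of $C$. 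Say $y$ is inside and $x$ is outside.

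Next we locate $b$ and $d$. At $\alpha$, the four rays towards $a,c,b,d$ alternate so that $a,b$ are opposite and $c,d$ are opposite. Thus $C$ uses two consecutive rays (towards $a$ and $c$), and the rays towards $b$ and $d$ leave $\alpha$ into the same local side of $C$. The segments $\alpha b$ and $\alpha d$ cannot cross $C$ again, since $ab$ and $cd$ are already crossed once. Hence $b$ and $d$ lie on the same side of $C$.

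The remaining step is to exhibit the cut. Let $S=\{a,c,x\}$ in the case that $b,d$ are on the side opposite to $y$. Then $y\notin S$ is on one side, and at least one of $b,d$, distinct from $x$ and not in $S$, is on the other. In $G-S$, every edge between the two sides would have to be $xy$, which has been removed with $x$. Therefore $S$ is a $3$-vertex-cut, contradicting 4-connectivity.

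If instead $b,d$ lie on the same side as $y$, take $S=\{a,c,y\}$. The vertex $x$ is on the other side, and one of $b,d$ (not equal to $y$) survives on $y$'s side, so again $S$ is a $3$-vertex-cut. In either case we reach a contradiction, so every associated edge is uncrossed and $D$ is nice.

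The only delicate step is the local argument at $\alpha$: that $b$ and $d$ lie on a common side of $C$, and that some vertex other than the chosen $S$ survives on each side. This is handled by the alternation of the rays at a proper crossing together with 1-planarity.
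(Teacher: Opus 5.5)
Your proposal is correct and follows essentially the same argument as the paper: you take the closed curve formed by $ac$ and the half-edges $a\alpha$, $\alpha c$, observe that $xy$ is the only edge meeting it, and delete $a$, $c$ and the endpoint of $xy$ lying on the side of $b,d$ to obtain a $3$-vertex-cut. You also justify explicitly that $b$ and $d$ lie on a common side of the curve and that a vertex survives on each side; the paper leaves both points implicit in its figure and its ``without loss of generality''.
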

\begin{proof}
Suppose $D$ is not nice. Then there exists a pair of edges $ab$ and $cd$ that cross at $\alpha$, with one of their associated edges being crossed.  Without loss of generality, assume that $ac$ crosses $xy$, with $x$ and $y$ lying inside and outside the region bounded by the edge $ac$ and the half-edges $c\alpha$ and $\alpha a$, respectively, as shown in Figure~\ref{fig:nice}. By 1-planarity, it then easily follows that $\{a, c, y\}$ forms a vertex-cut of $G$, a contradiction to the premise that  $\kappa(G)\ge 4$.
\end{proof}

\begin{figure}[H]
\centering
\begin{tikzpicture}[scale=0.45]
	\begin{pgfonlayer}{nodelayer}
		\node [style=whitenode] (0) at (-3, 3) {$a$};
		\node [style=whitenode] (1) at (3, 3) {$d$};
		\node [style=whitenode] (2) at (-3, -3) {$c$};
		\node [style=whitenode] (3) at (3, -3) {$b$};
		\node [style=whitenode] (4) at (-2, 0) {$x$};
		\node [style=whitenode] (5) at (-5, 0) {$y$};
		\node [style=none] (6) at (0, -1) {$\alpha$};
	\end{pgfonlayer}
	\begin{pgfonlayer}{edgelayer}
		\draw [style=blackedge] (0) to (3);
		\draw [style=blackedge] (1) to (2);
		\draw [style=blackedge] (4) to (5);
		\draw [style=blackedge] (0) to (2);
	\end{pgfonlayer}
\end{tikzpicture}

\caption{A vertex-cut $\{a,c,y\}$  arising by the contradiction assumption}
\label{fig:nice}
\end{figure}

In the following, we provide some  properties of type-2A 1-planar graphs. The first lemma is straightforward.

\begin{lem}\label{lem:delete1_planar}
Let $G$ be a  graph with a type-2A 1-planar drawing $D$.  Then the following statements hold. 
\begin{itemize}
  \item[(i)]If $H$ is an induced subgraph of $G$, then $D|_H$ is  type-2A 1-planar, and
  \item[(ii)]if $D$ is nice and $X$ is the set of all crossed edges in $D$, and   
$H$ is obtained from $G$ by deleting some edges from $X$, then $D|_H$ is  type-2A 1-planar.
\end{itemize}

\end{lem}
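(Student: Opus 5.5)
The plan is to check the two parts directly from the definitions. For each crossing of $D|_H$, I compare its associated edges in $H$ with those of the same crossing in $D$. Part (i) uses that an induced subgraph keeps every edge among the four endpoints. Part (ii) uses niceness: associated edges are uncrossed, so deleting crossed edges never removes them.

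For (i), restricting $D$ to $H$ deletes vertices and edges but introduces no new crossings. Hence $D|_H$ is still a 1-planar drawing, and it is good since $D$ is good. Let $ab$ and $cd$ cross at $\alpha$ in $D|_H$. Both edges lie in $H$, so $a,b,c,d\in V(H)$, and $\alpha$ is also a crossing of $D$ with the same two edges. Because $H$ is induced, $H[\{a,b,c,d\}]=G[\{a,b,c,d\}]$. So the associated edges of $\alpha$ in $H$ are exactly those in $G$. Every crossing of $D$ is type-2, and every crossing with exactly two associated edges is type-2A. Both properties depend only on this set of associated edges, so they carry over to $D|_H$.

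For (ii), every edge of $G$ not in $X$ is still present in $H$, and all vertices are kept. Again $D|_H$ is a 1-planar drawing with no new crossings. Let $ab$ and $cd$ be edges of $H$ that cross at $\alpha$ in $D|_H$. Then they also cross in $D$. Since $D$ is nice, every associated edge of $\alpha$ in $D$ is uncrossed, so none of them lies in $X$, and all of them survive in $H$. Conversely, $H$ is a subgraph of $G$, so $H$ has no associated edges that $G$ lacks. Therefore the set of associated edges of $\alpha$ is the same in $H$ and in $G$, and the type-2 and type-2A conditions transfer as in (i).

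The only point needing care is in (ii): the associated-edge set must be preserved exactly, not just its size, since the type-2A condition depends on whether the two edges form a matching. Niceness gives this exact preservation, because it makes the associated edges disjoint from $X$. Without niceness, deleting a crossed associated edge could turn a type-2 crossing into type-1, which is why that hypothesis is there. Part (i) needs no such hypothesis, because an induced subgraph keeps all edges among the four endpoints.
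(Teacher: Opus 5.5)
Your proof is correct and takes the same route as the paper. Part (i) holds because an induced subgraph keeps every edge among the four endpoints of a crossing, and part (ii) holds because niceness makes every associated edge uncrossed, so deleting crossed edges leaves the associated-edge set of each surviving crossing unchanged; you just spell out in more detail what the paper treats as immediate.
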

\begin{proof}
 Clearly (i) holds. Since $D$ is nice, any crossed edge does not serve as an associated edge for a crossing pair in $D$. Thus by removing only the crossed edges, $D|_H$ is still  type-2A 1-planar, as desired.
\end{proof}

The following lemma follows directly from the definition of type-2A 1-planar drawings.

\begin{lem}\label{lem:localcrossing}
Let $G$ be a graph with a type-2A 1-planar drawing $D$. If $x_1x_3$ crosses $x_2x_4$ in $D$, then for any $i$ where $1\le i\le 4$, $x_i$ is adjacent to $x_{i-1}$ or $x_{i+1}$ where $x_0=x_4$ and $x_5=x_1$.
\end{lem}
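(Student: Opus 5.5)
This is a direct case check against the definition of a type-2A crossing. The crossing pair is $x_1x_3$ and $x_2x_4$. Its possible associated edges are the four ``cycle'' pairs $x_1x_2$, $x_2x_3$, $x_3x_4$, $x_4x_1$. The pairs $x_1x_3$ and $x_2x_4$ are the crossing edges themselves, so they cannot be associated edges. In a type-2A drawing the crossing is of type-2, so at least two of the four cycle pairs are edges of $G$. The plan is to fix an index $i$ and show that $x_i$ is incident to at least one associated edge; by the indexing convention, this means $x_i$ is adjacent to $x_{i-1}$ or $x_{i+1}$.

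\emph{Three or four associated edges.} Each vertex $x_i$ is an endpoint of exactly two of the four cycle pairs, namely $x_{i-1}x_i$ and $x_ix_{i+1}$. So exactly two of the four pairs avoid $x_i$. If at least three pairs are edges, at most one edge avoids $x_i$, and therefore some edge is incident to $x_i$.

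\emph{Exactly two associated edges.} Here the type-2A condition applies: the two associated edges form a matching. The only matchings of size two on the 4-cycle $x_1x_2x_3x_4$ are $\{x_1x_2, x_3x_4\}$ and $\{x_2x_3, x_4x_1\}$. Each of these covers all four vertices, so $x_i$ lies on one of the two edges.

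The only point needing care is the second case. Without the matching condition the two associated edges could be, say, $x_1x_2$ and $x_2x_3$, leaving $x_4$ uncovered. This is exactly the situation the type-2A hypothesis excludes, so I do not expect any real obstacle.
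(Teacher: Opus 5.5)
Your proof is correct and is the routine check the paper has in mind when it says the lemma follows directly from the definition of type-2A drawings; the paper gives no further argument. One point you use implicitly is worth stating: the four vertices $x_1,x_2,x_3,x_4$ are distinct because drawings are assumed good, so in a simple graph the associated edges are exactly those of the four cycle pairs that are present in $G$.
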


\begin{lem}\label{lem:componentsuncrossed}
Let $G$ be a graph with a type-2A 1-planar drawing $D$, and let $S$ be a vertex-cut of $G$. Then the following statements hold.

\begin{itemize}
  \item [(i)] For any two distinct components $F$ and $F'$ of $G-S$, if $ab \in E(F)$ and $cd \in E_G(S, V(F'))$, then the edges $ab$ and $cd$ do not cross in  $D$.
  \item [(ii)] Any two edges from distinct components of  $G-S$ do not cross each other in $D$. 
\end{itemize}

\end{lem}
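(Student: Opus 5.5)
Both parts rest on one observation: since $S$ separates $F$ from $F'$, no edge of $G$ joins a vertex of $F$ to a vertex of $F'$. We then apply Lemma~\ref{lem:localcrossing} to a hypothetical crossing pair and show that some endpoint of the crossing has no admissible neighbour among the other three endpoints. For this we label the crossing as in Lemma~\ref{lem:localcrossing}: if $x_1x_3$ crosses $x_2x_4$, the four endpoints lie in cyclic order $x_1,x_2,x_3,x_4$, and each $x_i$ must be adjacent to $x_{i-1}$ or $x_{i+1}$. In the drawing, these two vertices are the endpoints of the other crossing edge. Since $D$ is a good drawing, crossing edges share no endpoint, so the four endpoints are distinct.

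For (ii), suppose $ab\in E(F)$ crosses $cd\in E(F')$ for distinct components $F,F'$ of $G-S$. Put $x_1=a$, $x_3=b$, $x_2=c$, $x_4=d$. The required neighbours of $x_1=a$ are $x_2=c$ or $x_4=d$, and both lie in $F'$. Since $a\in V(F)$, neither adjacency exists, which contradicts Lemma~\ref{lem:localcrossing}. This part is immediate.

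For (i), suppose $ab\in E(F)$ crosses $cd$, where $c\in S$ and $d\in V(F')$. Label $x_1=a$, $x_3=b$, $x_2=c$, $x_4=d$. Apply Lemma~\ref{lem:localcrossing} at $x_4=d$: its possible partners are $x_3=b$ and $x_1=a$. Both lie in $F$, while $d$ lies in $F'$, so $d$ is adjacent to neither. This again contradicts Lemma~\ref{lem:localcrossing}.

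The only point needing care is applying Lemma~\ref{lem:localcrossing} correctly: the neighbours it requires of an endpoint are the endpoints of the \emph{other} crossing edge. We choose the endpoint lying in a component and confined there, namely $a$ in (ii) and $d$ in (i). Once that is done, both parts follow in one line each, and no use of connectivity or niceness is needed.
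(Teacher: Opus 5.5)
Your proof is correct and is essentially the paper's argument. In both parts you use the type-2A condition, via Lemma~\ref{lem:localcrossing}, to force an associated edge at an endpoint that is confined to one component: $d$ in (i) and $a$ in (ii). That edge would join $F$ to $F'$, which is impossible because $S$ separates them.
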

\begin{proof}

(i). Suppose that $ab$ crosses $cd$ in $D$. Without loss of generality, we may assume that $c \in V(S)$ and $d\in V(F')$.   Since $D$ is type-2A, we have $ad$ or $ bd \in E_G(V(F),V(F')) $, a contradiction to  that $F$ and $F'$ are distinct components.

(ii). Suppose that for two components $F$ and $F^{\prime}$ of $G-S$, an edge from $F$ crosses an edge from $F^{\prime}$. Since $D$ is type-2A, there must exist an associated edge connecting $F$ and $F^{\prime}$, which contradicts the assumption that $F$ and $F^{\prime}$ are distinct components.
\end{proof}

The following proposition follows directly from Lemma \ref{lem:componentsuncrossed} and describes, in a type-2A 1-planar graph $G$, which edges cross a given edge from a  component of $G-S$ where $S$ is a vertex-cut of $G$.
\begin{prop}\label{lem:crosing}
Let $G$ be a graph with a type-2A 1-planar drawing $D$. Let $S$ be a vertex-cut of $G$, and $F$ be a component of $G - S$. Let $ab$ be an edge of $F$. If $ab$ is crossed by an edge $xy$ in $D$, then $xy$ can only belong to one of the following three cases.
\begin{itemize}
  \item [(a)]  $x,y \in S$;
  \item [(b)]  $x,y \in V(F)$; and
 \item [(c)] $x\in S$ and $y\in V(F)$, or  $y\in S$ and $x\in V(F)$.
\end{itemize}
\end{prop}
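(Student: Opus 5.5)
The proposition is a direct case analysis on where the endpoints $x,y$ of the crossing edge can lie, using Lemma \ref{lem:componentsuncrossed}. Since $ab \in E(F)$, both $a,b \in V(F)$. The vertex set of $G$ is partitioned into $S$, $V(F)$, and the vertex sets of the other components $F'$ of $G-S$. So each endpoint of $xy$ lies in $S$, in $V(F)$, or in some $V(F')$ with $F' \neq F$. I will enumerate the possible pairs of locations and show that every pair involving some $V(F')$ is impossible. The pairs that remain are exactly cases (a), (b) and (c).

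First, $xy$ is an edge of $G$, and there are no edges between distinct components of $G-S$ or between $F$ and $F'$. Hence if one endpoint lies in some $V(F')$ with $F'\neq F$, the other endpoint lies in $V(F')$ or in $S$.

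\begin{itemize}
\item If both $x,y \in V(F')$, then $xy \in E(F')$. An edge of $F$ and an edge of $F'$ cannot cross by Lemma \ref{lem:componentsuncrossed}(ii), which is a contradiction.
\item If one endpoint is in $S$ and the other in $V(F')$, then $xy \in E_G(S,V(F'))$. An edge of $F$ cannot cross such an edge by Lemma \ref{lem:componentsuncrossed}(i), again a contradiction.
\end{itemize}

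Hence neither endpoint lies outside $S \cup V(F)$. The remaining possibilities are: both endpoints in $S$, which is case (a); both in $V(F)$, which is case (b); or one in $S$ and one in $V(F)$, which is case (c).

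Before invoking Lemma \ref{lem:componentsuncrossed}, I should check its hypotheses. We need $G$ to carry a type-2A 1-planar drawing $D$ and $S$ to be a vertex-cut, so that $G-S$ has at least two components; both are given here. If $F$ were the only component there would be no $F'$ at all, and the conclusion would hold trivially. The argument is essentially bookkeeping, so I do not expect a real obstacle. The only point needing care is to use part (i) of the lemma in the orientation where $ab$ is the edge of $F$ and $cd=xy$ is the $S$–$F'$ edge, which is exactly how that lemma is stated.
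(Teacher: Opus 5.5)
Your proof is correct and follows the same route as the paper, which simply says the proposition follows directly from Lemma \ref{lem:componentsuncrossed}. Your case analysis spells out exactly that deduction: edges of $G$ cannot join distinct components of $G-S$, and parts (ii) and (i) of that lemma then rule out any endpoint of $xy$ lying in another component $F'$.
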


\begin{lem}\label{lem:type3_preserves_connected}
Let $G$ be a type-2A 1-plane graph. Let $G'$ be the graph obtained from $G$ by removing one edge from each pair of crossing edges. If $G$ is connected and nice, then $G'$ is a connected, spanning and planar subgraph of $G$.
\end{lem}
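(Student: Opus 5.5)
Throughout, let $G$ be connected with a nice type-2A 1-planar drawing $D$, and let $G'$ be obtained by deleting, for each crossing $\alpha$ (say $ab$ crosses $cd$), one of the two edges, say $cd$. Planarity of $G'$ is clear: every crossing of $D$ loses one of its two edges, so $D|_{G'}$ has no crossings. Spanning is also clear, since no vertices are removed. The only real work is connectivity.

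For connectivity, it suffices to show that for every deleted edge $cd$, its endpoints $c$ and $d$ are still joined by a path in $G'$. Every path in $G$ can then be rerouted to avoid deleted edges, so connectivity of $G$ passes to $G'$.

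Fix a crossing where $ab$ crosses $cd$ and $cd$ was deleted. By Lemma~\ref{lem:localcrossing}, applied to the cyclic order $a,c,b,d$ around the crossing, the four endpoints have at least the following associated edges. Vertex $c$ is adjacent to $a$ or to $b$, and vertex $d$ is adjacent to $a$ or to $b$. Since $D$ is nice, every associated edge is uncrossed in $D$, so no associated edge is ever deleted and all of them lie in $G'$. Also $ab$ itself is in $G'$: it crosses only $cd$ by 1-planarity, and at this crossing we deleted $cd$, not $ab$. We now split into two cases.
\begin{itemize}
\item If $c$ and $d$ have a common neighbour $x\in\{a,b\}$ among the associated edges, then $c\,x\,d$ is a path in $G'$.
\item Otherwise $c$ is adjacent to one of $a,b$ and $d$ to the other, say $ca$ and $db$. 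Then $c\,a\,b\,d$ is a path in $G'$, using the undeleted edge $ab$.
\end{itemize}
In either case $c$ and $d$ are connected in $G'$, which proves that $G'$ is a connected, spanning, planar subgraph of $G$.

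The main subtlety is ensuring that the edges used in the reconnecting paths are not themselves deleted at some other crossing. This is exactly where niceness is needed: it makes every associated edge uncrossed. It is also where 1-planarity is needed: $ab$ is crossed only by $cd$, so it survives the deletion step. Note that the type-2A hypothesis is used only through Lemma~\ref{lem:localcrossing}; the case where exactly two associated edges form a matching is the second case above.
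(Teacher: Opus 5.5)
Your proof is correct and uses the same mechanism as the paper's. The associated edges at a crossing are uncrossed by niceness and so survive in $G'$, and together with the surviving crossing partner (kept by 1-planarity) they keep the endpoints of every deleted edge in one component of $G'$. The paper argues by contradiction: it takes an edge of $G$ joining two components of $G'$ and does a case analysis on where the surviving partner $cd$ lies, using the associated edge from $b$ to $\{c,d\}$. Your direct reconnecting-path argument reaches the same conclusion slightly more cleanly.
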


\begin{proof}
 Clearly,  $G'$ is spanning and  planar.  Suppose that $G'$ is not connected.  Since $G$ is connected, there  exist two distinct components $F^1$ and $F^2$ of $G'$ such that  $|E_G(V(F^1),V(F^2))| \ge 1$. Without loss of generality, assume that $ab \in E_G(V(F^1),V(F^2))$, where $a \in V(F^1)$ and $b \in V(F^2)$. Clearly, the edge $a b$ is crossed in $G$. We assume that $a b$ crosses with $c d$. Clearly, $c d$ is an edge of $G^{\prime}$, and thus it is impossible for $c$ and $d$ to be such that one is in $F^1$ and the other in $F^2$. Thus, we only need to consider two cases.

\begin{case} 
$c \in V(F^1)\cup V(F^2) \cup V(F)$  and $d \in V(F)$, where $F$ is a component of $G^{\prime}$ distinct from $F^1$ and $F^2$.
\end{case} 

For $c \in V(F^1)$, since $G$ is type-2A, by Lemma \ref{lem:localcrossing},  either $b c \in G^{\prime}$ or $b d \in G^{\prime}$.  The former ($b c \in G'$) contradicts that $F^1$ and $F^2$ are  distinct components, while the latter ($b d \in G'$) contradicts that $F$ and $F^2$ are  distinct components.  
Similarly, for  $c \in V(F^2)$ or $c\in V(F)$, a contradiction also arises.

\begin{case} 
 $c,d \in V(F^1)$ or  $c,d \in V(F^2)$ .  
\end{case} 
For $c,d \in V(F^1)$, since $G$ is type-2A,  by Lemma \ref{lem:localcrossing},  either $b c \in E(G')$ or $b d \in E(G')$, which contradicts that $F^1$ and $F^2$ are  distinct components.   Symmetrically, for $c,d \in V(F^2)$, a contradiction also arises. 

As the above two cases lead to contradictions, it follows that $G'$ is connected, thereby establishing the proposition.
\end{proof}

%



\begin{lem}\label{lem:asso}
Let $G$ be a graph with a type-2A 1-planar drawing $D$. Let $X$ and $Y$ be two distinct vertex subsets of $G$. Let $G'$ be the graph obtained by removing all crossed edges in $E_G(X,Y)$ that cross some edge with both endvertices in $X$. If $D$ is nice, then 
\[
N_{G'}(X,Y) = N_{G}(X,Y).
\]
\end{lem}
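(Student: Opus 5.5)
The inclusion $N_{G'}(X,Y)\subseteq N_G(X,Y)$ is immediate, since $G'$ is a spanning subgraph of $G$. I will treat $X$ and $Y$ as disjoint, which is how the neighborhood notation $N_G(V_1,V_2)$ was introduced. The plan for the reverse inclusion is to show that every vertex $y\in N_G(X,Y)$ keeps at least one edge to $X$ in $G'$. The tools are Lemma~\ref{lem:localcrossing} (the type-2A local condition) and the niceness of $D$.

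Fix $y\in N_G(X,Y)$ and choose $x\in X$ with $xy\in E(G)$. If $xy\in E(G')$ we are done. Otherwise $xy$ was removed, so by the definition of $G'$ it is crossed in $D$ by an edge $ab$ with $a,b\in X$. Since $D$ is 1-planar, this is the only crossing on $xy$. Label the four endpoints cyclically as $x_1=x$, $x_2=a$, $x_3=y$, $x_4=b$, so that $x_1x_3$ crosses $x_2x_4$. Lemma~\ref{lem:localcrossing} applied at $i=3$ then gives that $y$ is adjacent to $x_2=a$ or to $x_4=b$. Say $ya\in E(G)$. Because $a\in X$ and $y\in Y$, this edge lies in $E_G(X,Y)$, and it is an associated edge of the crossing pair $xy,ab$.

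It remains to check that $ya$ was not deleted. The deleted edges are all crossed in $D$, and $D$ is nice, so every associated edge of a crossing pair is uncrossed. Hence $ya$ is uncrossed and survives in $G'$, which gives $y\in N_{G'}(X,Y)$.

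The only delicate step is applying Lemma~\ref{lem:localcrossing} with the correct cyclic labelling, so that the neighbours it guarantees for $y$ are exactly $a$ and $b$ (both in $X$) rather than $x$. With the labelling $x_1=x$, $x_2=a$, $x_3=y$, $x_4=b$, the two neighbours of $x_3$ are $x_2,x_4$, so this holds. Beyond that, the argument only uses niceness to ensure the replacement edge is never among the removed ones.
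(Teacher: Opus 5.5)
Your proof is correct and is essentially the paper's argument: Lemma~\ref{lem:localcrossing}, applied to the crossing of $xy$ with $ab$, gives an associated edge $ya$ or $yb$, and niceness makes that edge uncrossed, so it survives in $G'$. Your remark that the replacement edge is uncrossed, and so is never among the deleted edges, is slightly cleaner than the paper's one-edge-at-a-time deletion.
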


\begin{proof}
It suffices to show that for each deleted crossed edge, its endvertex in $Y$ is still preserved.
We select any edge $xy$ from $E(G)\setminus E(G')$. Without loss of generality, let $xy$ cross $ab$, where $x \in X$, $y \in Y$, and $a,b \in X$.  
Moreover, since $D$ is type-2A 1-planar, by Lemma \ref{lem:localcrossing} we have $ya \in E(G)$ or $yb \in E(G)$. As $D$ is nice, whichever of $ya$ or $yb$ exists is uncrossed in $D$. Hence $ya \in E(G-xy)$ or $yb \in E(G-xy)$. Therefore, $y \in N_{G'}(X,Y)$. Clearly, by   a similar argument, deleting the remaining edges in $E(G) \backslash E\left(G^{\prime}\right)$ one by one, we obtain $N_{G^{\prime}}(X, Y)=N_G(X, Y)$.
\end{proof}

\section{Proof of Theorem \ref{thm:1}}

 The proof of the existence of near-perfect matchings of 6-connected type-2A 1-planar graphs $G$ relies on the Tutte-Berge formula. However, we actually bound  $\max_{S\subseteq V(G)} c(G-S)-S$  where $c(G - S) \geq \operatorname{odd}(G - S)$ (i.e., counting components of $G-S$ with both even and odd cardinality; see details in the following Proposition \ref{prop:main}). This bound (when restricted to sets $S$ with $c(G - S) \geq 2$ ) is also known as the \emph{scattering number} $s(G):=\max _{S \subseteq V(G), c(G - S) \geq 2}\{c(G - S)-|S|\}$ \cite{MR0491356}. The proof is based on a reduction scheme that removes certain crossed edges and eventually yields a particular 1-planar minor, and this idea has appeared in recent work by Biedl and Wittnebel \cite{MR4813988}; however, there are many differences in the details.  A key technical step exploits properties of type-2A 1-planar graphs: by deleting certain edges, each component of $G-S$ becomes a planar connected subgraph, which can then be contracted into a single vertex. We then leverage 6-connectivity and bipartite-edge-counting to obtain a  upper bound.
 
\begin{prop}\label{prop:main}
Let $G$ be a  type-2A 1-planar graph. If $G$ is 6-connected, then for any subset $S \subseteq V(G)$,  the inequality $c(G - S) -|S|\le 1$ holds.
\end{prop}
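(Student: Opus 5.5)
Suppose, for contradiction, that some $S\subseteq V(G)$ satisfies $c(G-S)\ge |S|+2$. Put $k=c(G-S)$ and let $F_1,\dots,F_k$ be the components of $G-S$. If $k\le 1$ there is nothing to prove, so we may assume $k\ge 2$. Then $S$ is a vertex-cut, and 6-connectivity gives $|S|\ge 6$. Each $F_i$ has at least six neighbours in $S$: since $k\ge 2$, $N_G(V(F_i))\subseteq S$ separates $F_i$ from the other components. By Lemma \ref{lem:4_connected_kite} the drawing $D$ is nice. The plan is to reduce $G$ to a bipartite 1-planar graph on vertex set $S\cup\{f_1,\dots,f_k\}$, where $f_i$ represents $F_i$, keeping all $S$-neighbours of each $F_i$. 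Counting edges will then contradict Lemma \ref{biparite1_planar}.

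\textbf{Reduction.} First delete all edges with both ends in $S$. This may create crossings that are no longer type-2A, but only edge counts matter from here on, and the 1-planarity of the drawing is unaffected. Next, fix a component $F$. By Proposition \ref{lem:crosing}, every edge of $F$ crosses either an edge of $F$, an edge from $S$ to $F$, or an $S$–$S$ edge (already deleted). By Lemma \ref{lem:componentsuncrossed}, no edge of $F$ crosses an edge belonging to, or attached to, another component.

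\begin{itemize}
\item[(1)] For each crossing between an edge of $E_G(S,V(F))$ and an edge of $F$, delete the $S$–$F$ edge. Lemma \ref{lem:asso}, applied with $X=V(F)$ and $Y=S$, shows that $N(V(F),S)$ is unchanged.
\item[(2)] For each crossing inside $F$, delete one of the two edges. Lemma \ref{lem:type3_preserves_connected}, applied to $G[V(F)]$ (which is type-2A and nice by Lemma \ref{lem:delete1_planar}(i) and Remark \ref{remark:kite}), shows that $F$ stays connected.
\end{itemize}

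After these deletions every remaining edge of $F$ is uncrossed. We can therefore contract a spanning tree of $F$ edge by edge, using Lemma \ref{keep1_planar} to keep the drawing 1-plane. Do this for every component. Contraction only merges the $F$-end of each $S$–$F$ edge into $f_i$, and distinct components share no edges, so the result is a simple bipartite 1-plane graph $B$ with parts $S$ and $\{f_1,\dots,f_k\}$, in which $\deg_B(f_i)=|N_G(V(F_i))\cap S|\ge 6$.

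\textbf{Counting.} The order of $B$ is $n_B=|S|+k\ge 8$, and
\[
6k\le e(B)\le 3(|S|+k)-8 .
\]
Hence $3k\le 3|S|-8$, so $k<|S|$, which contradicts $k\ge |S|+2$. This in fact proves the stronger conclusion $c(G-S)<|S|$ whenever $k\ge 2$, which is certainly enough.

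\textbf{Main obstacle.} The delicate step is the reduction. The order of the deletions must not destroy what the cited lemmas need: Lemma \ref{lem:asso} needs the crossing edge in $X$ to be present, and it needs the type-2A associated edge to be an uncrossed edge from $y$ to $F$ that survives the later steps. Plan: perform step (1) before step (2), and before contracting, check that every associated edge it relies on is an uncrossed $S$–$F$ edge, which niceness guarantees. We must also check that, once the $S$–$S$ edges are deleted, the only remaining crossings between $S$–$F$ edges and other edges are crossings with edges of $F$. That check again uses Lemma \ref{lem:componentsuncrossed}(i) together with the type-2A condition.
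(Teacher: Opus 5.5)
Your proof is correct and is essentially the paper's own argument: your steps (1) and (2) are the paper's Operations I and II, followed by the same use of Lemma \ref{lem:asso} and Lemma \ref{lem:type3_preserves_connected}, the same spanning-tree contraction, and the same count $6k\le e(B)\le 3(|S|+k)-8$ (the paper deletes the $S$--$S$ edges crossing edges of $F$ during Operation I and the rest of $H[S]$ just before contracting, rather than all $S$--$S$ edges at the start). The one blemish is the extra check in your last paragraph, which is neither valid in general nor needed: type-2A allows $S$--$F$ edges to cross one another, and allows $s_1a$ with $a\in V(F)$ to cross $s_2c$ with $c\in V(F')$ whenever $s_1c,s_2a\in E(G)$, but Lemma \ref{keep1_planar} only requires the contracted edges of $F$ to be uncrossed; such crossings only mean that two edges at $f_i$ may cross after contraction, which, as the paper notes, is fixed by a local redrawing.
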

\begin{proof}
Let $D$ be a type-2A 1-planar  drawing of $G$. If $S$ is not a vertex-cut, then we have
$
 c(G-S) \leq 1
$.
In this case, the desired inequality clearly holds. We now  assume that $S$ is a vertex-cut. Clearly, $|S|\ge 6$. We may assume that $F^1, \ldots, F^\ell$ are the components of $G-S$.
For each $i$ where $1\le i\le \ell$, we proceed with the following two operations to delete edges from $G$. (The two operations are independent and order does not matter.)

\begin{itemize}
 \item[I.]  For every edge $uv \in E(F^i)$, if it crosses an edge $xy$ in $D$ with both endvertices in $S$, or with one endvertex in $F^i$ and the other in $S$, then $xy$ is deleted.
  \item[II.] For each pair of crossed edges in $D|_{F^i}$,  we delete one of them.
\end{itemize}

The following Figure \ref{fig:op} shows a local illustration. Vertices $s_1,s_2,\dots,s_6$ belong to the set $S$, while $a,b,c,d,e,h$ belong to one component of $G-S$.
Under Operation I, we delete the edges $s_4s_6$ and $s_3c$; under Operation II, we delete one of the edges $ac$ (or $bd$).

\begin{figure}[H]
\centering
\begin{tikzpicture}[scale=0.75]
\tikzset{
  pinknode/.style={circle, draw=black, fill=pink!60, minimum size=5mm, inner sep=0pt}
}

	\begin{pgfonlayer}{nodelayer}
		\node [style=pinknode] (0) at (-8, 2) {$d$};
		\node [style=pinknode] (1) at (-5, 2) {$c$};
		\node [style=pinknode] (2) at (-8, 0) {$a$};
		\node [style=pinknode] (3) at (-6.5, 0) {$b$};
		\node [style=pinknode] (4) at (-4, 0) {$e$};
		\node [style=whitenode] (5) at (-5, -3) {$s_3$};
		\node [style=pinknode] (6) at (-2.5, 0) {$h$};
		\node [style=whitenode] (7) at (-4, -3) {$s_4$};
		\node [style=whitenode] (8) at (-1, -3) {$s_6$};
		\node [style=whitenode] (10) at (-7, -3) {$s_2$};
		\node [style=whitenode] (11) at (-8, -3) {$s_1$};
		\node [style=whitenode] (12) at (-3, -3) {$s_5$};
		\node [style=pinknode] (13) at (1.75, 2) {$d$};
		\node [style=pinknode] (14) at (4.75, 2) {$c$};
		\node [style=pinknode] (15) at (1.75, 0) {$a$};
		\node [style=pinknode] (16) at (3.25, 0) {$b$};
		\node [style=pinknode] (17) at (5.75, 0) {$e$};
		\node [style=whitenode] (18) at (4.75, -3) {$s_3$};
		\node [style=pinknode] (19) at (7.25, 0) {$h$};
		\node [style=whitenode] (20) at (5.75, -3) {$s_4$};
		\node [style=whitenode] (21) at (8.75, -3) {$s_6$};
		\node [style=whitenode] (22) at (2.75, -3) {$s_2$};
		\node [style=whitenode] (23) at (1.75, -3) {$s_1$};
		\node [style=whitenode] (24) at (6.75, -3) {$s_5$};
		\node [style=none] (25) at (0, 0) {$\rightarrow$};
	\end{pgfonlayer}
	\begin{pgfonlayer}{edgelayer}
		\draw (0) to (3);
		\draw (1) to (2);
		\draw (2) to (3);
		\draw (0) to (1);
		\draw (3) to (4);
		\draw (1) to (5);
		\draw (4) to (6);
		\draw [bend left=75, looseness=4.00] (7) to (8);
		\draw (1) to (4);
		\draw (2) to (10);
		\draw (10) to (3);
		\draw (5) to (4);
		\draw (6) to (8);
		\draw (4) to (7);
		\draw (3) to (5);
		\draw (2) to (11);
		\draw (11) to (3);
		\draw (6) to (12);
		\draw (11) to (10);
		\draw (10) to (5);
		\draw (7) to (5);
		\draw (7) to (12);
		\draw (12) to (8);
		\draw (13) to (16);
		\draw (15) to (16);
		\draw (13) to (14);
		\draw (16) to (17);
		\draw (17) to (19);
		\draw (14) to (17);
		\draw (15) to (22);
		\draw (22) to (16);
		\draw (18) to (17);
		\draw (19) to (21);
		\draw (17) to (20);
		\draw (16) to (18);
		\draw (15) to (23);
		\draw (23) to (16);
		\draw (19) to (24);
		\draw (23) to (22);
		\draw (22) to (18);
		\draw (20) to (18);
		\draw (20) to (24);
		\draw (24) to (21);
	\end{pgfonlayer}
\end{tikzpicture}

\caption{Illustration of Operations I and II.}
\label{fig:op}
\end{figure}
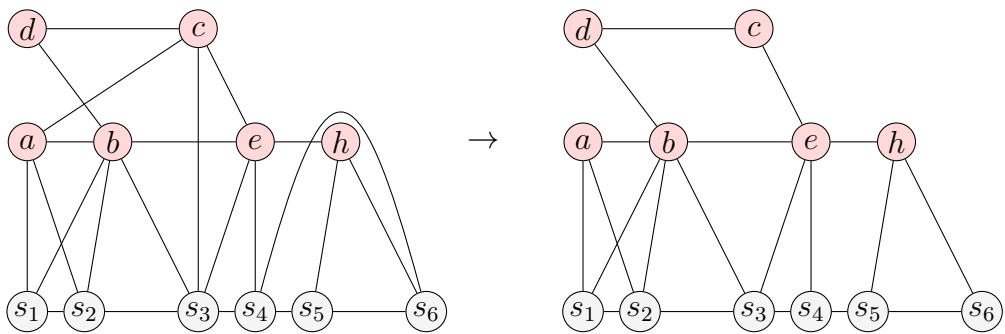

After performing the above two operations, we obtain a spanning subgraph of $G$, denoted by $H$, and let $\varphi=D|_{H}$. 
In the above process, only crossed edges are removed, so by Remark \ref{remark:kite}, $\varphi$ is  nice. Clearly, $\varphi$ is type-2A 1-planar.  Furthermore, we have the following claim concerning the structure of $H$.

\begin{claim}\label{claim:H}
For any $i$ with $1\le i \le \ell$, the following statements on $H$ hold.
\begin{itemize}
\item[(i)]  The vertices of  $F^i$ still induce a  component, namely $F^i_*$, of $H$;
\item[(ii)]  every edge in $F^i_*$ is uncrossed in $\varphi$; and
\item[(iii)] $N_{H}(V(F^i_*)) \subseteq S$ and $\left|N_{H}(V(F^i_*),S)\right| \geq 6$.
\end{itemize}

\end{claim}

\begin{proof}(i). Let $H'$ be a 1-plane graph obtained by completing the operation I from $G$. No  edges (and vertices) of $F^i$ are removed, so it remains connected in $H'$.  By Lemma \ref{lem:delete1_planar} and Remark \ref{remark:kite}, $H'$ is a nice type-2A 1-plane graph. Note that Operation II does not affect any structure of $H[V(F^i)]$. By Lemma \ref{lem:delete1_planar} and Remark \ref{remark:kite}, $H'[V(F^i)]$ is also type-2A 1-plane and nice.
 Furthermore, by Lemma \ref{lem:type3_preserves_connected}, (i) holds. 

(ii). By  Proposition \ref{lem:crosing}, in $G$, for any $i$ where $1 \le i \le l$, if an edge $ab \in E(F^i)$ crosses an edge $cd$, then $\{c,d\} \subseteq S\cup V(F^i)$. Hence, after completing Operations I and II, every edge of $F^i_*$ is uncrossed in $\varphi$, and statement (ii) holds
 
 (iii). Since $\kappa(G)\ge 6$, it is easy to see that $N_{G}(V(F^i)) \subseteq S$ and $\left|N_G(V(F^i),S)\right| \geq 6$.  Applying Lemma \ref{lem:asso} with $H = G'$, $V(F^i) = X$, and $S = Y$ yields the desired statement (iii).
\end{proof}

Next, we delete all edges in $H[S]$.  By Claim \ref{claim:H}(i), for any $i(1 \leq i \leq \ell), F_*^i$ contains a spanning tree, and we successively contract the edges along this tree, ultimately reducing $F_*^i$ to a single vertex $f^i$.  
 Clearly, the resulting graph, namely $B$, is a bipartite graph with partite sets $S$ and $T$, where $T := \{f^1, f^2, \dots, f^l\}$.
 Note that the above edge contraction may cause two adjacent edges to cross with each other in the resulting  drawing of $B$. If this phenomenon appears, we can modify the drawing so that this two adjacent edges are no longer crossed.

\begin{claim}\label{claim:B}
The following properties on $B$ hold.

\begin{itemize}
\item[(i)]  $B$ is 1-planar; and
\item[(ii)] $\left|N_B(f^i,S)\right| \geq 6$ where $1\le i\le \ell$.
\end{itemize}
\end{claim}

\begin{proof}
Firstly, (i) follows from  Lemma \ref{keep1_planar}. By the definition of edge contractions, we have $N_B\left(f^i, S\right)=N_H\left(F_*^i, S\right)$. Combining this with Claim \ref{claim:H} (iii), we have $\left|N_B\left(f^i, S\right)\right| \geq 6$. Hence, (ii) holds.
\end{proof}

Now we ﬁnish the proof of the proposition.
By Claim \ref{claim:B}(ii), $6l\le e(B)$. Clearly $n(B)\ge 7>4$. Combining Claim \ref{claim:B}(i) with Lemma \ref{biparite1_planar}, $e(B) \le 3(l+|S|)-8$. Thus $l-|S|\le -\frac{8}{3}$, that is $c(G-S)-|S|\le -\frac{8}{3}<1$.
 This completes the proof.
\end{proof}

\subsection{Proof of Theorem \ref{thm:1}} 

For any vertex subset $S$ of $G$, it is clear that $\odd(G-S) \leq c(G-S)$. Combining this with Proposition \ref{prop:main}, we have $\odd(G-S)-|S| \leq1$. Hence, by Lemma \ref{lem:tutteberge}, we  have $\alpha^{\prime}(G)=\lfloor n(G) / 2\rfloor$, as desired.

\section{Conclusion}
In this paper, we show that every 6-connected type-2A 1-planar graph admits a near-perfect matching. However, for all type-2, type-1, and even more general 1-planar graphs, the problem remains unsolved. It is worth noting that there do exist 6-connected 1-planar graphs that are not of type-2A; an explicit example can be found in Figure~5 of Noguchi, Ota, and Suzuki \cite{MR4958078}.
In particular, the approach used in Theorem~\ref{thm:1}—contracting each component into a single vertex while maintaining the 1-planarity of the resulting bipartite graph—does not immediately generalize to resolve the  problem. Therefore, a more detailed structure and novel methods are required. Finally, the known results and open problems are summarized in the following table.
\begin{table}[H]
\footnotesize
\centering
\begin{tabular}{|c|c|c|p{3.5cm}|c|c|}

\hline

\textbf{} & \textbf{Type-4 1-planar} & \textbf{Type-3 1-planar} & \textbf{Type-2 1-planar} & \textbf{Type-1 1-planar} & \textbf{1-planar} \\
\hline
3 & \no \makebox[0pt][l]{(i)} & \no &  \no & \no  & \no \\
\hline
4 & \yes \makebox[0pt][l]{(i)} & \no & \no &\no & \no \\
\hline
5 & \yes & \no \makebox[0pt][l]{(ii)} & \no & \no & \no \\
\hline
6 & \yes & \yes \makebox[0pt][l]{(ii)} & \textbf{?} (\yes for type-2A)  \makebox[0pt][l]{(ii)}& \textbf{?} & \textbf{?}  \\
\hline
7 & \yes  &\ding{51}  & \textbf{?} (\yes for type-2A) & \textbf{?}  & \textbf{?} \\
\hline
\end{tabular}

\medskip

\begin{tabular}{llcl}
i. & Fabrici et al. \cite{MR4130388} & ii. & This paper\\

\end{tabular}
\caption{Near-perfect matchings in 1-planar graphs and related families, categorized by connectivity from 3 to 7. Entries marked with \ding{51} indicate that all such graphs having  near perfect matchings; \ding{55} denote the existence of examples having no near perfect matchings; the symbol \textbf{?} is used to denote open problems.} 
\label{tab:type-connectivity}
\end{table}

\section*{Acknowledgements}
The first author is grateful to Prof. Therese Biedl for giving a proof of the existence of near-perfect matchings in 4-connected planar graphs that does not rely on Tutte’s Hamiltonian cycle theorem on planar graphs.
The authors declare that they have no conflicts of interest. This research was supported by grant from the National Natural Science Foundation of China (Grant Nos. 12271157, 12371346) and the Postdoctoral Science Foundation of China (Grant No. 2024M760867).

%

\end{document}